\DeclareMathOperator{\with}{\&}
\theoremstyle{plain}
  \newtheorem{theorem}{Theorem}[section]
  \newtheorem{lemma}[theorem]{Lemma}
  \newtheorem{prop}[theorem]{Proposition}
  \newtheorem{cor}[theorem]{Corollary}
\theoremstyle{definition}
  \newtheorem{definition}[theorem]{Definition}
  \newtheorem{example}[theorem]{Example}
\newcommand{\bv}{\bigvee}
\begin{document}

\title{Formal balls of  ${\sf Q}$-categories}

\author{Xianbo Yang,  Dexue Zhang \\ {\small School of Mathematics, Sichuan University, Chengdu, China}\\  {\small  xianboyang@outlook.com, dxzhang@scu.edu.cn}  }
\date{}
\maketitle

\begin{abstract}The construction of the formal ball model for metric spaces due to  Edalat and Heckmann was generalized to {\sf Q}-categories by Kostanek and Waszkiewicz.  This paper concerns the influence of the structure of the quantale {\sf Q} on the connection between Yoneda completeness of ${\sf Q}$-categories and directed completeness of their sets of formal balls. In the case that {\sf Q} is the interval $[0,1]$ equipped with a continuous t-norm \&, it is shown that in order that  Yoneda completeness of each {\sf Q}-category be equivalent to directed completeness of its set of formal balls, a necessary and sufficient condition is that the t-norm \& is Archimedean.
 \vskip 1pt

\noindent  {\bf Keywords:} ${\sf Q}$-category,  Formal ball, Continuous t-norm, Quantale
\vskip 1pt
\noindent  {\bf MSC(2020):}  18D20, 06B99
\end{abstract}


\section{Introduction}
The formal ball construction  is a basic tool for quasi-metric spaces, as demonstrated in \cite{Goubault,Goubault2019,Goubault-Ng}.
Goubault-Larrecq and Ng  \cite{Goubault-Ng} argued  that ``formal balls are the essence of quasi-metric space''. Formal balls were first introduced in \cite{WS} for metric spaces, later extended to quasi-metric spaces in \cite{Ru,SV2005}, then to the general setting of ${\sf Q}$-categories in \cite{KW2011}, where ${\sf Q}$ is a commutative and unital quantale.

Metric properties of a quasi-metric space $X$ are closely related with the order structure of its set   $\mathbf{B}X$ of formal balls.  A typical example says, a quasi-metric space $X$ is  continuous and  Yoneda complete  in the sense of \cite{KW2011,Was2009} if and only if   $\mathbf{B}X$ is a continuous dcpo (directed complete partially ordered set) \cite{Goubault,Goubault-Ng}.  Another example in this vein, which motivates this paper, asserts that a quasi-metric space $X$ is Yoneda complete if and only if $\mathbf{B}X$ is a dcpo.     This result was first proved in \cite{EH98} for metric spaces; then in \cite{Ali,KW2011} for quasi-metric spaces. In fact, Kostanek and Waszkiewicz \cite{KW2011} proved the conclusion for ${\sf Q}$-categories with ${\sf Q}$ being a special kind of \emph{value quantale}, not only for quasi-metric spaces. But, the requirements imposed on the quantale in \cite{KW2011} are so strong  that if ${\sf Q}$ is the interval $[0,1]$ together with a continuous t-norm,  up to isomorphism  there is  only one t-norm, the product t-norm,  that satisfies the requirements.

Thus, for a general quantale $\sf Q$, even for the quantale obtained by endowing the interval $[0,1]$ with a continuous t-norm, the question remains open  whether we have an equivalence between Yoneda completeness of a $\sf Q$-category  and directed completeness of its set of formal balls.

In this paper we show that the answer is negative. Actually, the answer depends on the structure of the quantale {\sf Q}, as we shall see.  Corollary \ref{maincor} shows that, in the case that  ${\sf Q}$ is the interval $[0,1]$ equipped with a continuous t-norm,  the equivalence holds if  and only if  the t-norm is Archimedean.

\section{Preliminaries}

A commutative and unital  quantale (a quantale for short) \cite{Rosenthal1990} \[{\sf Q}=({\sf Q},\with,k)\] is a commutative monoid with $k$ being the unit, such that the underlying set ${\sf Q}$ is a complete lattice (with a top element $1$ and a bottom element $0$), and that the multiplication $\with$ distributes over arbitrary joins in the sense that \[p\with\Big(\bigvee_{i\in I}q_i\Big)=\bigvee_{i\in I}p\with q_i\] for all $p, q_i\in{\sf Q}$ $(i\in I)$.   If  the unit $k$ is the top element, then we say that ${\sf Q}$ is \emph{integral}. For all elements $p,q$ of an integral quantale, we always have $p\with q\leq p\wedge q$.

Given a quantale ${\sf Q}=({\sf Q},\with,k)$, the  multiplication $\&$ determines a binary operator $\rightarrow$,  called the implication  corresponding to $\&$, via the adjoint property:
\[p\with q\leq r\iff q\leq p\rightarrow r.\]

Typical examples of quantales include (i) $(H,\wedge,1)$, where $H$ is a complete Heyting algebra; (ii) Lawvere's quantale  $([0,\infty]^{\rm op},+,0)$; and   (iii)   $([0,1],\with,1)$, where $\with$ is a left continuous t-norm. Actually, a left continuous t-norm on $[0,1]$  \cite{KMP00} is just a binary operation $\with\colon[0,1]\times[0,1]\longrightarrow[0,1]$ such that $([0,1],\with,1)$ is a quantale.

A continuous t-norm on $[0,1]$ is a left continuous t-norm that is  continuous with respect to the usual topology.   We refer to the monograph \cite{KMP00} for   continuous t-norms. Three basic   continuous t-norms and their implication operators are listed below:
\begin{enumerate}[label={\rm(\roman*)}] \setlength{\itemsep}{0pt}
\item  The G\"{o}del t-norm:  \[ x\with_G y= \min\{x,y\}; \quad x\rightarrow y=\begin{cases}
		1 &x\leq y,\\
		y &x>y.
		\end{cases} \] 

\item  The product t-norm:   \[ x\with_P y=xy; \quad x\rightarrow y=\begin{cases}
		1 &x\leq y,\\
		y/x &x>y.
		\end{cases} \] 

\item  The {\L}ukasiewicz t-norm: \[ x\with_{\text \L} y=\max\{0,x+y-1\}; \quad x\rightarrow y=\min\{1-x+y,1\}. \]
	\end{enumerate}

A continuous t-norm on $[0,1]$ is   \emph{Archimedean}, if for all $x,y\in(0,1)$  there is some integer $n$ such that $x^n<y$, where $$x^n=\underbrace{x\with x\with\cdots\with x}_{n~\text{times}}.$$ It is not hard to see that a continuous t-norm is Archimedean if and only if it has no idempotent element  other than $0$ and $1$. It is well-known (see e.g.  \cite{KMP00})  that if $\with$ is a continuous Archimedean t-norm, then the quantale  $([0,1],\with,1)$ is either isomorphic to  $([0,1],\with_{\text \L},1)$ or to $([0,1],\with_{P},1)$. In other words, up to isomorphism there are precisely two Archimedean continuous t-norm on $[0,1]$: the product t-norm and the {\L}ukasiewicz t-norm.
\begin{definition}\cite{HST2014,Lawvere1973,Wagner97}
Let ${\sf Q}$ be a quantale.  A  ${\sf Q}$-category  consists of a set $X$ and a map $o\colon X\times X\longrightarrow {\sf Q}$  such that
 \[k\leq o(x,x), \quad
o(y,z)\with  o(x,y)\leq o(x,z)\]
for all $x,y,z\in X$. As usual, we   write $X$ for the pair $(X, o)$ and   $X(x,y)$ for $o(x,y)$ if no confusion would arise.\end{definition}

If ${\sf Q}$ is the Boolean algebra $\{0,1\}$, then a ${\sf Q}$-category is exactly a preordered set; that is, a set together with a reflexive and transitive relation. If ${\sf Q}$ is Lawvere's quantale $([0,\infty]^{\rm op},+,0)$, then a ${\sf Q}$-category is exactly a generalized metric space in the sense of Lawvere \cite{Lawvere1973}; such a  ${\sf Q}$-category is also known as a pseudo-quasi-metric space (with distance allowed to be infinite).

Let $X$ be a ${\sf Q}$-category. A \emph{formal ball} of $X$ is a pair $(x,r)$ with $x\in X$ and $r\in{\sf Q}$, $x$ is called the center and $r$ the radius.
For  formal balls  $(x,r)$ and $(y,s)$, define \[(x,r)\leq_{\mathbf{B}X} (y,s) \quad\text{if}~ r\leq s\with X(x,y).\] Then $\leq_{\mathbf{B}X}$ is a reflexive and transitive relation, hence a preorder. We write $\mathbf{B}X$ for the set of  formal balls of $X$ endowed with the preorder $\leq_{\mathbf{B}X}$. We often omit the subscript  if it causes no confusion. We note that in this paper  the radius $r$ of a formal ball $(x,r)$ is allowed to be the bottom element of {\sf Q}.

A net $(x_\lambda)_{\lambda\in D}$ in a ${\sf Q}$-category  $X$  is \emph{forward Cauchy} \cite{FSW1996,Wagner97} if \[\bigvee_\lambda\bigwedge_{\lambda\leq\gamma\leq\mu}X(x_\gamma,x_\mu)\geq k.\]
An element $a$ of $X$ is a \emph{Yoneda limit}   \cite{FSW1996,Wagner97} of  $(x_\lambda)_{\lambda\in D}$  if  for all $y\in X$, \[X(a,y)=\bigvee_\lambda\bigwedge_{\mu\geq\lambda}X(x_\mu,y).  \]

Yoneda limits of forward Cauchy nets can be characterized  as colimits of forward Cauchy  weights. A \emph{weight} of a $\sf Q$-category $X$ is a map $\phi\colon X\longrightarrow{\sf Q}$  such that $ \phi(y)\with X(x,y)\leq \phi(x)$  for all $x,y\in X$. For each forward Cauchy net $(x_\lambda)_{\lambda\in D}$ of $X$,  the map \[\phi\coloneqq\bigvee_\lambda\bigwedge_{\mu\geq\lambda}X(-,x_\mu) \]  is a weight of $X$, such a weight is said to be \emph{forward Cauchy} \cite{LZZ2020}.
Forward Cauchy weights of a $\sf Q$-category $X$ are also known as  \emph{ideals}  in the literature, see e.g. \cite{FSW1996}. An element $a$ of $X$ is a \emph{colimit} of a weight $\phi$ \cite{HST2014,FSW1996} if  for all $y\in X$, \[X(a,y)=\bigwedge_{x\in X}(\phi(x)\rightarrow X(x,y)).\]
\begin{prop} {\rm (\cite[Lemma 46]{FSW1996})}
\label{yoneda limit as colimits} Let $(x_\lambda)_{\lambda\in D}$ be a forward Cauchy net of a ${\sf Q}$-category  $X$. Then, an element $a$ of $X$ is a Yoneda limit of $(x_\lambda)_{\lambda\in D}$ if and only if $a$ is a colimit of the forward Cauchy weight $\phi=\bigvee_\lambda\bigwedge_{\mu\geq\lambda}X(-,x_\mu)$. Therefore, a ${\sf Q}$-category $X$  is   Yoneda complete  if and only if   every forward Cauchy weight of $X$  has a colimit. \end{prop}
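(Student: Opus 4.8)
The plan is to reduce the stated equivalence to a single identity in $\sf Q$. Writing $\phi(x)=\bigvee_\lambda\bigwedge_{\mu\geq\lambda}X(x,x_\mu)$, an element $a$ is a Yoneda limit of $(x_\lambda)_{\lambda\in D}$ precisely when $X(a,y)=\bigvee_\lambda\bigwedge_{\mu\geq\lambda}X(x_\mu,y)$ for every $y$, while $a$ is a colimit of $\phi$ precisely when $X(a,y)=\bigwedge_{x\in X}(\phi(x)\rightarrow X(x,y))$ for every $y$. Since both conditions prescribe the value of $X(a,y)$, it suffices to prove, for each fixed $y\in X$, the identity
\[\bigvee_\lambda\bigwedge_{\mu\geq\lambda}X(x_\mu,y)=\bigwedge_{x\in X}\Big(\phi(x)\rightarrow X(x,y)\Big);\]
once this holds, $a$ satisfies one defining equation if and only if it satisfies the other, so the two notions coincide. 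I would establish the two inequalities separately.

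For $\leq$, by the adjunction $p\with q\leq r\iff q\leq p\rightarrow r$ it is enough to show, for every $x$, that $\phi(x)\with\big(\bigvee_\lambda\bigwedge_{\mu\geq\lambda}X(x_\mu,y)\big)\leq X(x,y)$. Distributing $\with$ over both joins reduces this to bounding $\big(\bigwedge_{\mu\geq\lambda}X(x,x_\mu)\big)\with\big(\bigwedge_{\nu\geq\gamma}X(x_\nu,y)\big)$ for each pair $\lambda,\gamma$. Choosing, by directedness of $D$, some $\delta\geq\lambda,\gamma$, the two factors dominate $X(x,x_\delta)$ and $X(x_\delta,y)$ respectively, so by monotonicity their product is at most $X(x,x_\delta)\with X(x_\delta,y)\leq X(x,y)$, the last step being the composition law of the $\sf Q$-category (and commutativity of $\with$). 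This direction uses only directedness and the triangle inequality, and should be routine.

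The reverse inequality $\geq$ is where the forward Cauchy hypothesis does the work, and I expect it to be the main obstacle. Write $R=\bigwedge_{x}(\phi(x)\rightarrow X(x,y))$ and $c_\lambda=\bigwedge_{\lambda\leq\gamma\leq\mu}X(x_\gamma,x_\mu)$, so that forward Cauchyness reads $\bigvee_\lambda c_\lambda\geq k$. Instantiating $R\leq\phi(x_\mu)\rightarrow X(x_\mu,y)$ at the net points gives $\phi(x_\mu)\with R\leq X(x_\mu,y)$ for every $\mu$. The crucial observation is that $\phi(x_\mu)\geq c_\lambda$ whenever $\mu\geq\lambda$: indeed $\phi(x_\mu)\geq\bigwedge_{\nu\geq\mu}X(x_\mu,x_\nu)$, and this meet ranges over a subfamily of the terms defining $c_\lambda$, since $\mu\geq\lambda$ makes each pair $\mu\leq\nu$ admissible in $c_\lambda$. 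Hence for $\mu\geq\lambda$ we obtain $X(x_\mu,y)\geq\phi(x_\mu)\with R\geq c_\lambda\with R$, so $\bigwedge_{\mu\geq\lambda}X(x_\mu,y)\geq c_\lambda\with R$. Taking the join over $\lambda$ and distributing $\with$ over it yields $\bigvee_\lambda\bigwedge_{\mu\geq\lambda}X(x_\mu,y)\geq\big(\bigvee_\lambda c_\lambda\big)\with R\geq k\with R=R$, as required.

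Finally, the ``Therefore'' clause follows formally. Yoneda completeness means every forward Cauchy net has a Yoneda limit, and by definition every forward Cauchy weight is the weight $\phi$ attached to some forward Cauchy net. The equivalence just established pairs Yoneda limits of a net with colimits of its weight, so ``every forward Cauchy net has a Yoneda limit'' and ``every forward Cauchy weight has a colimit'' are the same assertion.
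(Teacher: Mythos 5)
Your proof is correct, but there is nothing in the paper to compare it against: the paper states this proposition as a quoted result, citing \cite[Lemma 46]{FSW1996}, and gives no proof of its own. So what you have produced is a self-contained proof of a cited result, and it checks out. The key move---reducing everything to the net-dependent but $a$-independent identity $\bigvee_\lambda\bigwedge_{\mu\geq\lambda}X(x_\mu,y)=\bigwedge_{x\in X}\bigl(\phi(x)\rightarrow X(x,y)\bigr)$ for each fixed $y$, so that the two defining equations prescribe the same value for $X(a,y)$---is clean and makes the logical structure transparent: existence of a limit is irrelevant to the identity itself. Your $\leq$ direction correctly uses only distributivity of $\with$ over joins (in both arguments, via commutativity), directedness of $D$, and the composition law $X(x_\delta,y)\with X(x,x_\delta)\leq X(x,y)$. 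Your $\geq$ direction correctly isolates where the forward Cauchy hypothesis enters: the inequality $c_\lambda\leq\bigwedge_{\nu\geq\mu}X(x_\mu,x_\nu)\leq\phi(x_\mu)$ for $\mu\geq\lambda$ is valid because each pair $\mu\leq\nu$ with $\mu\geq\lambda$ is admissible in the meet defining $c_\lambda$, and then $\bigvee_\lambda(c_\lambda\with R)=\bigl(\bigvee_\lambda c_\lambda\bigr)\with R\geq k\with R=R$ finishes it. The ``Therefore'' clause is also handled correctly, and the subtle point there is exactly the one you flag implicitly: the paper \emph{defines} forward Cauchy weights as those induced by forward Cauchy nets, so the completeness equivalence is an immediate formal consequence of the per-net equivalence.
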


Given a $\sf Q$-category $X$,  Yoneda completeness  of $X$ is closely related with directed completeness of the set $\mathbf{B}X$ of its formal balls. As mentioned before, when $\sf Q$ is Lawvere's quantale $([0,\infty]^{\rm op},+,0)$, a $\sf Q$-category (i.e., a generalized metric space) is Yoneda complete if and only if its set of formal balls is directed complete \cite{Ali,EH98,KW2011}.

In the case that ${\sf Q}$  is a continuous and integral quantale, Proposition \ref{yoneda limit as colimits} and Lemma \ref{characterization of Cauchy ideal} below explain to some extent why Yoneda completeness of a $\sf Q$-category and directed completeness of its set of formal balls are  closely related.


Before proceeding on, we recall the notion of continuous lattices first.
Let $a,b$ be elements of a partially ordered $P$. We say that $a$ is \emph{way below} $b$, in symbols $a\ll b$, if for each directed set $D$ of $P$ with a join,  \[b\leq\textstyle{\bigvee} D \implies \exists\thinspace d\in D, a\leq d.\] A \emph{continuous lattice} \cite{Gierz2003} is a complete lattice $L$ for which every element is the join of  elements way below it; that is, $a=\bigvee\{x\in L\mid x\ll a\}$. The interval $[0,1]$ is clearly a continuous lattice.
A \emph{continuous quantale} is a quantale for which the  underlying lattice is  continuous.

\begin{lemma}\label{characterization of Cauchy ideal} Let ${\sf Q}$ be a continuous and integral quantale. Then  for each weight $\phi$ of a ${\sf Q}$-category $X$, the following are equivalent: \begin{enumerate}[label=\rm(\arabic*)]\setlength{\itemsep}{0pt} \item $\phi$ is   forward Cauchy.
\item   $\phi$   satisfies the following conditions:  \begin{enumerate}[label=\rm(\roman*)]\setlength{\itemsep}{0pt} \item $\bigvee_{x\in X}\phi(x)=1$;
\item If $r\ll 1$ and $s_i\ll\phi(x_i)~ (i=1,2)$, then  there exists  $x\in X$ such that $r\ll\phi(x)$ and that  $s_i\ll X(x_i,x) ~ (i=1,2)$.
\end{enumerate}
\item There is a directed subset $(x_\lambda,r_\lambda)_{\lambda\in D}$   of $\mathbf{B}X $ such that $\bigvee_{\lambda\in D}r_\lambda=1$ and that  $\phi=\bigvee_{\lambda\in D}\bigwedge_{\mu\geq \lambda} X(-,x_\mu)$. \end{enumerate} \end{lemma}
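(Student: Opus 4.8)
The plan is to establish the three conditions equivalent via the cycle $(1)\Rightarrow(2)\Rightarrow(3)\Rightarrow(1)$, with the observation that only the first two implications need the continuity of $\sf Q$. I would dispose of $(3)\Rightarrow(1)$ first and directly. Reading $(x_\lambda,r_\lambda)_{\lambda\in D}$ as a monotone net in $\mathbf{B}X$, the order on formal balls together with integrality (so that $k=1$ is the top, and $s\with X(x,y)\leq X(x,y)$) yields $r_\gamma\leq X(x_\gamma,x_\mu)$ whenever $\gamma\leq\mu$ and $r_\lambda\leq r_\gamma$ whenever $\lambda\leq\gamma$. Hence $\bigwedge_{\lambda\leq\gamma\leq\mu}X(x_\gamma,x_\mu)\geq r_\lambda$; taking the join over $\lambda$ and invoking $\bigvee_\lambda r_\lambda=1$ shows the net of centers is forward Cauchy, and by hypothesis $\phi$ is precisely its forward Cauchy weight.

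For $(1)\Rightarrow(2)$ I would write $\phi=\bigvee_\lambda\bigwedge_{\mu\geq\lambda}X(-,x_\mu)$ for a forward Cauchy net. Condition (i) is immediate since $\phi(x_\lambda)\geq\bigwedge_{\lambda\leq\gamma\leq\mu}X(x_\gamma,x_\mu)$ and the join of the latter over $\lambda$ is $1$. For (ii) the key is that $\phi(x_i)$ and the top $1$ are \emph{directed} joins of the tails $\bigwedge_{\mu\geq\lambda}X(x_i,x_\mu)$; given $s_i\ll\phi(x_i)$ and $r\ll 1$, interpolation in the continuous lattice $\sf Q$ lets me enlarge $s_i,r$ slightly so that each is dominated by a single tail, and then any common upper index $\lambda_*$ of the finitely many resulting indices produces $x:=x_{\lambda_*}$ with $s_i\ll X(x_i,x)$ and $r\ll\phi(x)$.

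The heart of the proof, and the step I expect to be the main obstacle, is $(2)\Rightarrow(3)$: the naive radii $\bigwedge_{\mu\geq\lambda}X(x_\lambda,x_\mu)$ attached to a forward Cauchy net do \emph{not} make the corresponding formal balls monotone, the obstruction being exactly that $\with$ is weaker than $\wedge$. I would therefore take the family $\mathcal{D}=\{(x,r)\in\mathbf{B}X : r\ll\phi(x)\}$, noting $r\ll\phi(x)\leq 1$ gives $r\ll 1$ for free. To prove $\mathcal{D}$ directed, given $(x_1,r_1),(x_2,r_2)\in\mathcal{D}$ I would use that $\with$ preserves arbitrary joins together with continuity to factor each $r_i\leq a_i\with s_i$ with $a_i\ll 1$ and $s_i\ll\phi(x_i)$ (this is possible because $\phi(x_i)=1\with\phi(x_i)=\bigvee_{a\ll 1,\,s\ll\phi(x_i)}a\with s$ is a directed join), set $a=a_1\vee a_2\ll 1$, and apply condition (ii) to obtain $x_3$ with $a\ll\phi(x_3)$ and $s_i\ll X(x_i,x_3)$; then $(x_3,a)\in\mathcal{D}$ and $r_i\leq a\with s_i\leq a\with X(x_i,x_3)$ give $(x_i,r_i)\leq(x_3,a)$. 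Condition (i) then yields $\bigvee_{(x,r)\in\mathcal{D}}r=\bigvee_x\phi(x)=1$.

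It remains to identify $\phi$ with the forward Cauchy weight $\psi(z)=\bigvee_{(x,r)\in\mathcal{D}}\bigwedge_{(y,s)\geq(x,r)}X(z,y)$ of the net of centers of $\mathcal{D}$. For $\psi\geq\phi$ I would show $\bigwedge_{(y,s)\geq(x,r)}X(z,y)\geq r\with X(z,x)$ by the triangle inequality (using $X(x,y)\geq r$ whenever $(x,r)\leq(y,s)$), and then $\bigvee_{(x,r)\in\mathcal{D}}r\with X(z,x)=\phi(z)$ by continuity and the weight inequality $\phi(x)\with X(z,x)\leq\phi(z)$. For $\psi\leq\phi$ I would note that for every $(y,s)\geq(x,r)$ the weight inequality gives $X(z,y)\with s\leq\phi(y)\with X(z,y)\leq\phi(z)$, so each tail $T(z)=\bigwedge_{(y,s)\geq(x,r)}X(z,y)$ satisfies $T(z)\with s\leq\phi(z)$; since directedness makes the up-set of $(x,r)$ cofinal in $\mathcal{D}$ one has $\bigvee_{(y,s)\geq(x,r)}s=1$, and distributivity of $\with$ over this join forces $T(z)\leq\phi(z)$. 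Combining the two inequalities gives $\psi=\phi$, completing the cycle.
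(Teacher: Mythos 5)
Your proof is correct and follows essentially the same route as the paper: the cycle $(1)\Rightarrow(2)\Rightarrow(3)\Rightarrow(1)$, with $(3)\Rightarrow(1)$ being exactly the argument of Lemma \ref{directed set is forward Cauchy}, and $(2)\Rightarrow(3)$ built on the identical directed set $\mathrm{B}\phi=\{(x,r)\in\mathbf{B}X\mid r\ll\phi(x)\}$ that the paper constructs. The only difference is one of exposition: the paper outsources $(1)\Leftrightarrow(2)$ and the verification that $\mathrm{B}\phi$ works to \cite[Lemma 6.3]{LZ07}, whereas you supply those details in full (interpolation for $(1)\Rightarrow(2)$, the factorization $r_i\leq a_i\with s_i$ for directedness, and the two inequalities identifying $\phi$ with the weight generated by $\mathrm{B}\phi$).
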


\begin{proof}That  (3) implies (1)  follows  immediately from Lemma \ref{directed set is forward Cauchy} below. The equivalence $(1)\Leftrightarrow(2)$ is contained in \cite[Lemma 6.3]{LZ07}; the implication $(2)\Rightarrow(3)$ is also proved there implicitly. So, here we only write down the construction of the directed subset.  Suppose that $\phi$ satisfies the conditions (i) and (ii).    Let   \[\mathrm{B}\phi=\{(x,r)\in \mathbf{B}X \mid  r\ll\phi(x)\}.  \]
Then $\mathrm{B}\phi$ is a directed subset  of $\mathbf{B}X $ that satisfies the requirement. \end{proof}

\begin{lemma}\label{directed set is forward Cauchy} Let ${\sf Q}$ be an integral quantale; let $X$ be a ${\sf Q}$-category and $(x_\lambda,r_\lambda)_{\lambda\in D}$ be a directed subset of $\mathbf{B}X $. If $\bigvee_{\lambda\in D}r_\lambda=1$, then $(x_\lambda)_{\lambda\in D}$ is a forward Cauchy net in $X$. \end{lemma}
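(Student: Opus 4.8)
The plan is to unwind the definitions and let integrality do all the work. Since $\sf Q$ is integral, the unit $k$ is the top element $1$, so the forward Cauchy condition I must verify reads $\bigvee_\lambda\bigwedge_{\lambda\leq\gamma\leq\mu}X(x_\gamma,x_\mu)\geq k$, and the defining order on $\mathbf{B}X$ together with the directedness (monotonicity) of $(x_\lambda,r_\lambda)_{\lambda\in D}$ tells me that $\lambda\leq\gamma$ implies $r_\lambda\leq r_\gamma\with X(x_\lambda,x_\gamma)$. The one structural fact I will lean on repeatedly is that in an integral quantale $p\with q\leq q$ for all $p,q$ (because $p\leq k$ and $\with$ is monotone), which is precisely the inequality $p\with q\leq p\wedge q$ recorded in the preliminaries.

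First I would record two monotonicity-type consequences of this order relation. From $(x_\lambda,r_\lambda)\leq(x_\gamma,r_\gamma)$ and the integral inequality I get $r_\lambda\leq r_\gamma\with X(x_\lambda,x_\gamma)\leq r_\gamma$, so the radii increase along the net: $\lambda\leq\gamma\Rightarrow r_\lambda\leq r_\gamma$. Applying the same reasoning to a pair $\gamma\leq\mu$, I get $r_\gamma\leq r_\mu\with X(x_\gamma,x_\mu)\leq X(x_\gamma,x_\mu)$, i.e.\ the hom-value dominates the smaller radius: $\gamma\leq\mu\Rightarrow r_\gamma\leq X(x_\gamma,x_\mu)$. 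This second observation is the heart of the argument, and it is exactly where integrality is indispensable---without $k=1$ one could only retain the factor $r_\mu$ and the conclusion would fail in general.

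Next I would assemble these into the forward Cauchy inequality. Fix $\lambda\in D$. For every pair $\gamma,\mu$ with $\lambda\leq\gamma\leq\mu$, chaining the two bounds gives $r_\lambda\leq r_\gamma\leq X(x_\gamma,x_\mu)$, so $r_\lambda$ is a lower bound for the whole family and hence $r_\lambda\leq\bigwedge_{\lambda\leq\gamma\leq\mu}X(x_\gamma,x_\mu)$. Taking the join over $\lambda$ and invoking the hypothesis $\bigvee_{\lambda\in D}r_\lambda=1$ yields
\[
\bigvee_\lambda\bigwedge_{\lambda\leq\gamma\leq\mu}X(x_\gamma,x_\mu)\geq\bigvee_{\lambda\in D}r_\lambda=1\geq k,
\]
which is the required condition, so $(x_\lambda)_{\lambda\in D}$ is forward Cauchy.

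I do not anticipate a genuine obstacle here: once the order on formal balls is unwound, the result is a short computation. The only points demanding care are the correct use of integrality in passing from the formal-ball order to the bound $r_\gamma\leq X(x_\gamma,x_\mu)$, and checking that allowing the radii to equal the bottom element of $\sf Q$ (as the paper explicitly permits) causes no difficulty, since the argument never inverts anything and uses only joins, meets, and monotonicity.
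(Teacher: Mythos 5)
Your proof is correct and follows essentially the same route as the paper: both arguments extract from the formal-ball order and integrality the two facts that the radii are monotone and that $r_\gamma\leq X(x_\gamma,x_\mu)$ for $\gamma\leq\mu$, then chain them and take the join over $\lambda$ to get $\bigvee_\lambda\bigwedge_{\lambda\leq\gamma\leq\mu}X(x_\gamma,x_\mu)\geq\bigvee_\lambda r_\lambda=1\geq k$. The paper compresses this into a single displayed chain of inequalities, but the content is identical.
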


\begin{proof} Since $r_\lambda \leq r_\mu\with X(x_\lambda,x_\mu)$ whenever  $\lambda\leq\mu$, it follows that \[1=\bigvee_{\lambda\in D}r_\lambda\leq\bigvee_{\lambda\in D}\bigwedge_{\lambda\leq\mu}r_\mu \leq \bigvee_{\lambda\in D}\bigwedge_{\lambda\leq\mu\leq\gamma}X(x_\mu,x_\gamma),\] hence $(x_\lambda)_{\lambda\in D}$ is   forward Cauchy. \end{proof}

\section{Counterexamples}

In this section we give two examples to show that for a general quantale $\sf Q$, Yoneda completeness of a $\sf Q$-category may fail to be equivalent to directed completeness of its set of formal balls.

\begin{example}\label{Example-G} This example   presents a  ${\sf Q}$-category that is Yoneda complete, but its set of formal balls is not directed complete.

Let $\with$ be a continuous non-Archimedean t-norm and let ${\sf Q}$ be the quantale $([0,1],\with,1)$. Since $\with$ is continuous and non-Archimedean, there is some $b\in(0,1)$ such that $b\with b=b$. Let $X=(0,b)$. Define a ${\sf Q}$-category structure on $X$ by \[X(x,y)=\begin{cases}1 & x=y,\\
\min\{x\rightarrow y,y\rightarrow x\} & x\not=y.\end{cases}\]

Since $X(x,y)\leq b$ whenever $x\not=y$, every forward Cauchy net of $X$ is eventually constant, so $X$ is Yoneda complete. It remains to show that $\mathbf{B}X$ is not directed complete. To this end, pick a strictly increasing sequence $(x_n)_{n\geq1}$  in $(0,b)$ that converges to $b$. For each $n$ let $r_n=x_n$. We claim that the subset $(x_n,r_n)_{n\geq1}$ of $\mathbf{B}X$ is directed and has no join.

Since $\with$ is a continuous t-norm, then  for all $x,y\in[0,1]$ we have $$x\with(x\rightarrow y) =\min\{x,y\}. $$  For all $n\leq m$, since  \begin{align*}r_n&=x_n =x_m\with(x_m\rightarrow x_n) = r_m\with X(x_n,x_m),\end{align*} then $(x_n,r_n)\leq(x_m,r_m)$, hence $(x_n,r_n)_{n\geq1}$ is directed.

Next  we show that $(x_n,r_n)_{n\geq1}$ does not have a join. Suppose on the contrary that $(x,r)$ is a join of $(x_n,r_n)_{n\geq1}$. Since $x<b$, there is some $n_0$ such that $x<x_m$ for all $m\geq n_0$.   Since $(x,r)$ is an upper bound  of $(x_n,r_n)_{n\geq1}$, for each $m\geq n_0$, we have \[r_m\leq r\with X(x_m,x)\leq x_m\rightarrow x,\] this is impossible since the left side tends to $b$, while the right side tends to $x$.\end{example}

\begin{example}\label{exmp3} This example   presents a ${\sf Q}$-category  that is not Yoneda complete, but its set  of formal balls is   directed complete.

Let ${\sf Q}$ be the quantale $([0,1],\with,1)$, where $\with$ is the G\"{o}del t-norm $\min$. Let $X=\{1\}\cup\{1-1/n\mid n\geq2\}$. Define a ${\sf Q}$-category structure on $X$ by $$X(x,y)=\begin{cases}1 & x=y,\\ 1/3 & x=1, y\not=1,\\ \min\{x,y\} &{\rm otherwise}.  \end{cases}$$  We claim that  $X$ is not Yoneda complete, but $\mathbf{B}X$   is directed complete.

For each $n\geq 2$, let $x_n=1-1/n$. It is readily verified that the sequence $(x_n)_{n\geq2}$ is  forward Cauchy and  has no Yoneda limit, so $X$ is not Yoneda complete.  It remains to check that $\mathbf{B}X$   is directed complete. Given a directed subset $D$ of $\mathbf{B}X$, we write $D$ as a net  $(x_\lambda,r_\lambda)_{\lambda\in D}$ indexed by itself. Since $r_\lambda\leq r_\mu\with X(x_\lambda,x_\mu)$ whenever $\lambda\leq \mu$,  the net $(r_\lambda)_{\lambda\in D}$ is monotone. Let $r=\bv_{\lambda\in D}r_\lambda$.
Now we proceed with two cases.

Case 1. The net $(x_\lambda)_{\lambda\in D}$ is eventually constant; that means, there is some $a\in X$ and some $\lambda\in D$ such that $x_\mu=a$ whenever $\mu\geq\lambda$.  In this case   $(a,r)$ is a join of $D$.

Case 2. The net $(x_\lambda)_{\lambda\in D}$ is not eventually constant. Then for each $\lambda$ there is some $\mu\geq \lambda$ such that $x_\lambda\not=x_\mu$, hence $$r_\lambda\leq r_\mu\with X(x_\lambda,x_\mu)\leq \min\{ r_\mu,x_\lambda,x_\mu\}\leq x_\lambda.$$ Let  $a=\min\{x\in X\mid r\leq x\}.$
Then $(a, r)$ is a join of $D$. \end{example}

A ${\sf Q}$-category $X$ is   \emph{Smyth complete} if it is Cauchy complete (see \cite{HST2014,Lawvere1973} for definition) and all forward Cauchy weights of $X$ are Cauchy. The notion of Smyth completeness originated in \cite{Smyth88}. For Smyth completeness of quasi-metric spaces, the reader is referred to \cite{Goubault,KS2002,RV2010}. The postulation of Smyth complete ${\sf Q}$-categories given here is based on the characterization of Smyth complete quasi-metric spaces in \cite[Section 6]{Liwei2018}.

Consider the ${\sf Q}$-category $X$ in Example \ref{Example-G}. Since every forward Cauchy net of $X$ is eventually constant,   $X$ is Yoneda complete and Smyth complete, hence continuous in the sense of \cite{KW2011,Was2009}. So, in contrast to  the situation for quasi-metric spaces \cite[Theorem 3.2]{RV2010},  for a general quantale {\sf Q}, the set of formal balls of a Smyth complete ${\sf Q}$-category may fail to be directed complete.

Example \ref{Example-G} also shows that for a directed subset $(x_\lambda,r_\lambda)_{\lambda\in D}$  of $\mathbf{B}X$, the net $(x_\lambda)_{\lambda\in D}$ of $X$ need not be forward Cauchy. 

\begin{prop} \label{Archi vs FC} Let $\with$ be a continuous t-norm on $[0,1]$ and let ${\sf Q}=([0,1],\with,1)$. The following are equivalent: \begin{enumerate}[label=\rm(\arabic*)]\setlength{\itemsep}{0pt} \item   $\with$ is   Archimedean. 
\item For each ${\sf Q}$-category $X$ and each directed subset $(x_\lambda,r_\lambda)_{\lambda\in D}$  of $\mathbf{B}X $ with   some $r_\lambda>0$, $(x_\lambda)_{\lambda\in D}$ is a forward Cauchy net. \end{enumerate} \end{prop}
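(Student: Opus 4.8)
The plan is to prove the two implications separately, with essentially all the work in $(1)\Rightarrow(2)$; the reverse implication is already available from the preceding material. For $(2)\Rightarrow(1)$ I would argue by contraposition: if $\with$ is not Archimedean, then Example \ref{Example-G} exhibits a ${\sf Q}$-category $X$ together with the directed subset $(x_n,r_n)_{n\geq1}$ of $\mathbf{B}X$ whose radii $r_n=x_n$ are all positive, yet (as observed in the remark following that example) the net $(x_n)_{n\geq1}$ fails to be forward Cauchy. This single witness already violates the universally quantified statement (2), so $\neg(1)$ implies $\neg(2)$.

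For the main implication $(1)\Rightarrow(2)$, the key tool I would use is the representation of a continuous Archimedean t-norm by a continuous additive generator: there is a continuous strictly decreasing map $t\colon[0,1]\longrightarrow[0,t(0)]$ with $t(1)=0$ and $x\with y=t^{-1}(\min\{t(x)+t(y),\,t(0)\})$ (see \cite{KMP00}), where $t(0)=\infty$ in the strict case and $t(0)<\infty$ in the nilpotent case. Given a directed subset $(x_\lambda,r_\lambda)_{\lambda\in D}$ of $\mathbf{B}X$ with some $r_{\lambda_0}>0$, I would first note that $\lambda\leq\mu$ forces $r_\lambda\leq r_\mu\with X(x_\lambda,x_\mu)\leq r_\mu$, so $(r_\lambda)_\lambda$ is monotone; set $r=\bigvee_\lambda r_\lambda\geq r_{\lambda_0}>0$, and observe that the indices $\lambda\geq\lambda_0$ are cofinal in $D$ and have $r_\lambda>0$. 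Applying the strictly decreasing map $t$ to $r_\gamma\leq r_\mu\with X(x_\gamma,x_\mu)$ for $\lambda_0\leq\gamma\leq\mu$ gives $t(r_\gamma)\geq\min\{t(r_\mu)+t(X(x_\gamma,x_\mu)),\,t(0)\}$; since $r_\gamma>0$ yields $t(r_\gamma)<t(0)$, the minimum cannot equal $t(0)$, and I obtain the central estimate
\[
t(X(x_\gamma,x_\mu))\leq t(r_\gamma)-t(r_\mu)\leq t(r_\lambda)-t(r)
\]
for all $\lambda_0\leq\lambda\leq\gamma\leq\mu$, using monotonicity of $(r_\lambda)_\lambda$ together with $t(r_\gamma)\leq t(r_\lambda)$ and $r_\mu\leq r$.

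It then follows, upon applying the decreasing inverse $t^{-1}$, that $\bigwedge_{\lambda\leq\gamma\leq\mu}X(x_\gamma,x_\mu)\geq t^{-1}(t(r_\lambda)-t(r))$ for every $\lambda\geq\lambda_0$. Since $r_\lambda\nearrow r>0$ and $t$ is continuous, $t(r_\lambda)\to t(r)$, whence $t(r_\lambda)-t(r)\to0$ and $t^{-1}(t(r_\lambda)-t(r))\to t^{-1}(0)=1$; taking the join over the cofinal set $\{\lambda\geq\lambda_0\}$ then yields $\bigvee_\lambda\bigwedge_{\lambda\leq\gamma\leq\mu}X(x_\gamma,x_\mu)=1$, so $(x_\lambda)_{\lambda\in D}$ is forward Cauchy. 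The point demanding most care, and the likely main obstacle, is the uniform treatment of the strict ($t(0)=\infty$) and nilpotent ($t(0)<\infty$) cases: one must verify that the hypothesis $r_\lambda>0$ is precisely what rules out the saturated branch $\min\{\cdot,t(0)\}=t(0)$ of the generator formula, and that every difference $t(r_\lambda)-t(r)$ remains finite and lies in the domain $[0,t(0)]$ on which $t^{-1}$ acts.
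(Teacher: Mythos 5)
Your proof is correct, and its overall skeleton matches the paper's: for $(2)\Rightarrow(1)$ you use exactly the paper's argument (contraposition via Example \ref{Example-G} and the remark following it), and for $(1)\Rightarrow(2)$ you use the same three-step scheme of monotonicity of the radii, a quantitative lower bound on $X(x_\gamma,x_\mu)$ in terms of the radii, and a convergence argument pushing that bound to $1$. The difference lies in the technical tool for the main implication. The paper asserts, without proof, the residuation fact that $0<r\leq s\with t$ implies $t\geq s\rightarrow r$ for continuous Archimedean t-norms, deduces $X(x_\lambda,x_\mu)\geq r_\mu\rightarrow r_\lambda$, and then invokes continuity of the implication away from $(0,0)$. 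You instead invoke the additive generator representation $x\with y=t^{-1}(\min\{t(x)+t(y),t(0)\})$ from \cite{KMP00} and derive the bound $t(X(x_\gamma,x_\mu))\leq t(r_\gamma)-t(r_\mu)$, i.e.\ $X(x_\gamma,x_\mu)\geq t^{-1}\bigl(t(r_\gamma)-t(r_\mu)\bigr)$, which is precisely the generator form of the paper's unproved fact (note that $s\rightarrow r=t^{-1}\bigl(\max\{t(r)-t(s),0\}\bigr)$ for $s,r>0$); your non-saturation argument, showing that positivity of $r_\gamma$ rules out the branch $\min\{\cdot,t(0)\}=t(0)$, is exactly what makes that fact true, and it handles the strict and nilpotent cases uniformly. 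So your route is somewhat more self-contained (modulo citing the representation theorem), at the cost of carrying the generator bookkeeping; the paper's is shorter because it leans on known properties of the residuum. Your handling of the hypothesis ``some $r_\lambda>0$'' via cofinality of $\{\lambda\geq\lambda_0\}$ is also sound and corresponds to the paper's ``without loss of generality assume all $r_\lambda>0$''.
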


\begin{proof} $(1)\Rightarrow(2)$ We'll make use of the following fact about continuous  Archimedean  t-norms: if $0<r\leq s\with t$, then $t\geq s\rightarrow r.$

Assume that  $(x_\lambda,r_\lambda)_{\lambda\in D}$ is a directed subset  of $\mathbf{B}X $  and, without loss of generality,  assume that   $r_\lambda>0$ for all $\lambda\in D$. Since $r_\lambda\leq r_\mu\with X(x_\lambda,x_\mu)$ whenever $\lambda\leq\mu$, then  $X(x_\lambda,x_\mu)\geq r_\mu\rightarrow r_\lambda$ whenever  $\lambda\leq\mu$. Since $(r_\lambda)_{\lambda\in D}$ converges to its join and the implication operator of an Archimedean   continuous t-norm is continuous except possibly at $(0,0)$, it follows that $X(x_\lambda,x_\mu)$ tends to $1$, so $(x_\lambda)_{\lambda\in D}$ is forward Cauchy.

$(2)\Rightarrow(1)$ Suppose on the contrary that $\with$ is non-Archimedean. Consider the ${\sf Q}$-category $X$  in Example \ref{Example-G}. Then the  subset $(x_n,r_n)_{n\geq1}$ of $\mathbf{B}X$ given there is    directed, but  $(x_n)_{n\geq2}$ is not forward Cauchy, a contradiction. \end{proof}

\section{The main result}
In order to state the main result, we still need two notions.

Let ${\sf Q}=({\sf Q},\with,k)$ be a quantale. We say that $\with$ \emph{distributes over non-empty meets} if  \[p\with\Big(\bigwedge_{i\in I}q_i\Big)=\bigwedge_{i\in I}p\with q_i\] for any $p\in \sf Q$ and any non-empty subset $(q_i)_{i\in I}$ of {\sf Q}.  It is clear that any continuous t-norm on $[0,1]$ distributes over non-empty meets.

\begin{definition}\label{defn of R}
Let ${\sf Q}$ be a continuous and integral quantale.  We say that a ${\sf Q}$-category $X$ has  property {\rm(R)}, if for each pair $(s,t)$ of elements of ${\sf Q}$ with $0<s\leq t$, there is some $r\ll1$ such that for all $x,y\in X$  and   all $r'\geq r$, we always have \[(x,t\with r')\leq (y,s)\iff(x,r')\leq(y,t\rightarrow s).\] \end{definition}


Corollary \ref{characeterizing R} below provides a characterization of $\sf Q$-categories with property (R) in the case that the quantale $\sf Q$ is the interval $[0,1]$ together with a continuous t-norm. By this characterization it is easy to find $\sf Q$-categories with or without property (R). Now we  present the main result of this paper.

\begin{theorem}\label{main}Let ${\sf Q}$ be a continuous and integral quantale such that $\with$ distributes over non-empty meets;  let $X$ be a  ${\sf Q}$-category. \begin{enumerate}[label=\rm(\roman*)]\setlength{\itemsep}{0pt} \item If $X$ is  Yoneda complete, then each directed subset $(x_\lambda,r_\lambda)_{\lambda\in D}$ of $\mathbf{B}X $ with $\bigvee_{\lambda\in D}r_\lambda=1$ has a join. \item If $X$ has property {\rm(R)} and  $\mathbf{B} X$ is directed complete, then $X$ is Yoneda complete. \end{enumerate}  \end{theorem}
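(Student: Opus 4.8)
The plan is to prove the two implications separately, leaning heavily on the colimit characterization of Yoneda completeness (Proposition \ref{yoneda limit as colimits}) and on Lemma \ref{characterization of Cauchy ideal}, which links forward Cauchy weights to directed subsets of $\mathbf{B}X$ with supremum of radii equal to $1$.

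For part (i), suppose $X$ is Yoneda complete and let $(x_\lambda,r_\lambda)_{\lambda\in D}$ be a directed subset of $\mathbf{B}X$ with $\bigvee_\lambda r_\lambda = 1$. By Lemma \ref{directed set is forward Cauchy} the net $(x_\lambda)_{\lambda\in D}$ is forward Cauchy, so the associated weight $\phi=\bigvee_\lambda\bigwedge_{\mu\geq\lambda}X(-,x_\mu)$ is forward Cauchy and, by Yoneda completeness together with Proposition \ref{yoneda limit as colimits}, has a colimit $a$, i.e. $X(a,y)=\bigwedge_x(\phi(x)\rightarrow X(x,y))$ for all $y$. I would then show that $(a,1)$ is the join of the family in $\mathbf{B}X$. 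To see it is an upper bound, I must verify $r_\lambda\leq 1\with X(x_\lambda,a)=X(x_\lambda,a)$ for each $\lambda$; this should follow from $\phi(x_\lambda)\geq\bigwedge_{\mu\geq\lambda}X(x_\lambda,x_\mu)\geq r_\lambda/r_\lambda$-type estimates combined with the adjunction, using that $a$ is a colimit (a colimit $a$ satisfies $\phi(x)\leq X(x,a)$, the unit of the adjunction). To see it is the least upper bound, take any upper bound $(y,s)$ and show $1\leq s\with X(a,y)$, which amounts to $X(a,y)\geq s\rightarrow 1$; here I would use the colimit formula for $X(a,y)$ together with the fact that $(y,s)$ dominates every $(x_\lambda,r_\lambda)$, translating the ball inequalities $r_\lambda\leq s\with X(x_\lambda,y)$ through the adjunction into lower bounds on $\phi(x_\lambda)\rightarrow X(x_\lambda,y)$.

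For part (ii), suppose $X$ has property (R) and $\mathbf{B}X$ is directed complete; I want Yoneda completeness, so by Proposition \ref{yoneda limit as colimits} I take an arbitrary forward Cauchy weight $\phi$ and must produce a colimit. By Lemma \ref{characterization of Cauchy ideal}, $\phi$ arises from the canonical directed set $\mathrm{B}\phi=\{(x,r)\mid r\ll\phi(x)\}$ with $\bigvee r=1$. Directed completeness of $\mathbf{B}X$ gives a join $(a,t)$ of $\mathrm{B}\phi$; since $\bigvee_{(x,r)\in\mathrm{B}\phi}r=1$ and $t$ dominates all these radii, one expects $t=1$, which I would confirm first. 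The crux is then to show that this center $a$ is a colimit of $\phi$, i.e. $X(a,y)=\bigwedge_x(\phi(x)\rightarrow X(x,y))$. The inequality $X(a,y)\leq\bigwedge_x(\phi(x)\rightarrow X(x,y))$ should come from $(a,1)$ being an upper bound of each $(x,r)$ with $r\ll\phi(x)$, giving $r\leq X(x,a)$ and hence $\phi(x)\leq X(x,a)$, then composing through $X(a,y)\with X(x,a)\leq X(x,y)$.

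The reverse inequality is where property (R) does its essential work, and this is the main obstacle. I expect to argue by contradiction or by a direct minimality argument: let $y\in X$ and set $s=\bigwedge_x(\phi(x)\rightarrow X(x,y))$; I must show $X(a,y)\geq s$. The idea is that the formal ball $(y,s)$ (or approximations $(y,s')$ with $s'\ll s$) should be an upper bound of $\mathrm{B}\phi$ in $\mathbf{B}X$, forcing $(a,1)\leq(y,s)$, which unwinds to $1\leq s\with X(a,y)$ and thence $X(a,y)\geq s\rightarrow 1\geq s$. To verify that $(y,s)$ is an upper bound I need $r\leq s\with X(x,y)$ whenever $r\ll\phi(x)$; but the definition of $s$ only gives $\phi(x)\leq s\rightarrow X(x,y)$, i.e. $s\with\phi(x)\leq X(x,y)$, which is not literally $r\leq s\with X(x,y)$. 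Bridging this gap is exactly the role of property (R): the biconditional $(x,t\with r')\leq(y,s)\iff(x,r')\leq(y,t\rightarrow s)$, applied with suitable $t$ and the approximants $r'\geq r$, lets me convert the "weight-level" inequality $s\with\phi(x)\leq X(x,y)$ into the required "ball-level" domination, for radii $r$ above the threshold $r\ll 1$ supplied by (R). I anticipate the delicate points will be choosing the parameters $(s,t)$ and handling the way-below approximations so that property (R) applies uniformly across the directed set, and the hypothesis that $\with$ distributes over non-empty meets will be used to pass the meet defining $s$ through the multiplication when assembling these estimates.
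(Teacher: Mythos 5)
Your part (i) is essentially correct and follows the paper's own route (the paper packages it as Lemma \ref{directed set is forward Cauchy} plus Lemma \ref{join of directed set in BX}); your upper-bound argument $r_\lambda\leq\phi(x_\lambda)\leq X(x_\lambda,a)$ is fine. One slip: $1\leq s\with X(a,y)$ is \emph{not} equivalent to $X(a,y)\geq s\rightarrow 1$ --- the adjunction runs $s\with q\leq r\iff q\leq s\rightarrow r$, and in fact $s\rightarrow 1=1$ always. The repair is immediate: any upper bound $(y,s)$ satisfies $r_\lambda\leq s\with X(x_\lambda,y)\leq s$, so $s\geq\bigvee_\lambda r_\lambda=1$, and then $X(a,y)=\bigvee_\lambda\bigwedge_{\mu\geq\lambda}X(x_\mu,y)\geq\bigvee_\lambda\bigwedge_{\mu\geq\lambda}r_\mu=1$, giving $(a,1)\leq(y,s)$.

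Part (ii), however, breaks at its crux. The intermediate claim --- that $(y,s)$ with $s=\bigwedge_x(\phi(x)\rightarrow X(x,y))$ is (or can be forced by property (R) to be) an upper bound of $\mathrm{B}\phi$ --- is false in general, and no choice of parameters in Definition \ref{defn of R} can rescue it. Indeed, if $(y,s)$ were an upper bound, then $(a,1)\leq(y,s)$ would give $1\leq s\with X(a,y)\leq s\wedge X(a,y)$ by integrality, forcing $s=1$ \emph{and} $X(a,y)=1$; but $s$ is supposed to come out equal to $X(a,y)$, which is an arbitrary element of ${\sf Q}$ (already for $\phi=X(-,a)$, the weight of a constant net, one has $s\leq X(a,y)<1$ whenever $X(a,y)<1$). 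What the definition of $s$ actually yields is $s\with r\leq s\with\phi(x)\leq X(x,y)$ for $r\ll\phi(x)$, i.e.\ that $(y,1)$ is an upper bound of the \emph{rescaled} family $\{(x,s\with r)\mid (x,r)\in\mathrm{B}\phi\}$; to convert $(x,s\with r)\leq(y,1)$ into $(x,r)\leq(y,s)$ via the biconditional of property (R) you would need the parameter pair $(1,s)$, which violates the constraint $0<s\leq t$ unless $s=1$. Property (R) simply does not move $s$ from one side of the ball relation to the other in the direction you need.

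There is also a structural reason the proposal cannot be completed as sketched: you invoke directed completeness of $\mathbf{B}X$ only for the single directed set $\mathrm{B}\phi$, whose radii have join $1$. If your argument went through, it would prove part (ii) under the weaker hypothesis that every directed subset with $\bigvee_\lambda r_\lambda=1$ has a join --- and Example \ref{second example} (product t-norm, where every ${\sf Q}$-category has property (R) by Proposition \ref{Archi vs reciprocal}) shows precisely that this weaker statement is false. Any correct proof must therefore manufacture and use joins of directed sets whose radii do \emph{not} have join $1$, and that is exactly what the paper's Lemma \ref{mainlemma} does: it forms the rescaled directed set $(x_\lambda,t\with r_\lambda)_{\lambda\in D}$, where $t$ is the liminf value (equal to your $s$), takes its join $(z,\sigma)$ using full directed completeness, checks that $(y,1)$ is an upper bound of this rescaled set (this is where $s\with\phi(x)\leq X(x,y)$ is correctly exploited), and uses property (R) to identify $(z,\sigma)\cong(a,t)$ --- i.e.\ to show that rescaling the radii rescales the join --- whence $(a,t)\leq(y,1)$ and $t\leq X(a,y)$.
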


Before proving Theorem \ref{main}, we make some preparations.

\begin{lemma}\label{join of directed set in BX}Let ${\sf Q}$ be a continuous and integral quantale such that $\with$ distributes over non-empty meets;   let $X$ be a ${\sf Q}$-category and let $(x_\lambda,r_\lambda)_{\lambda\in D}$ be a directed subset    of $\mathbf{B}X $. If $(x_\lambda)_{\lambda\in D}$ is a forward Cauchy net with $x$ being a Yoneda limit, then $(x,r)$ is a join of the directed set $(x_\lambda,r_\lambda)_{\lambda\in D}$, where $r=\bigvee_{\lambda\in D}r_\lambda$. \end{lemma}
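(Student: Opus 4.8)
The plan is to verify the two defining clauses of a join directly from the definitions, with the distributivity of $\with$ over non-empty meets doing the essential work in both. Throughout, write $\phi=\bigvee_\lambda\bigwedge_{\mu\geq\lambda}X(-,x_\mu)$ for the associated forward Cauchy weight; by Proposition~\ref{yoneda limit as colimits} the Yoneda limit $x$ is a colimit of $\phi$. A preliminary observation is that since $\with\leq\wedge$ in an integral quantale, the inequality defining $(x_\lambda,r_\lambda)\leq(x_\mu,r_\mu)$ gives $r_\lambda\leq r_\mu$ whenever $\lambda\leq\mu$, so the family $(r_\lambda)_{\lambda\in D}$ is monotone and $\bigwedge_{\mu\geq\lambda}r_\mu=r_\lambda$ for each $\lambda$; recall also $r=\bigvee_\lambda r_\lambda$.

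First I would show that $(x,r)$ is an upper bound, i.e. $r_\lambda\leq r\with X(x_\lambda,x)$ for every $\lambda$. The key input is that a colimit satisfies $\phi\leq X(-,x)$: setting $y=x$ in the colimit formula and using $X(x,x)=k=1$ forces $\phi(z)\leq X(z,x)$ for all $z$, so in particular $X(x_\lambda,x)\geq\phi(x_\lambda)\geq\bigwedge_{\mu\geq\lambda}X(x_\lambda,x_\mu)$. Multiplying by $r$ and pushing $\with$ through the non-empty meet yields $r\with X(x_\lambda,x)\geq\bigwedge_{\mu\geq\lambda}\bigl(r\with X(x_\lambda,x_\mu)\bigr)$, and for each $\mu\geq\lambda$ the term satisfies $r\with X(x_\lambda,x_\mu)\geq r_\mu\with X(x_\lambda,x_\mu)\geq r_\lambda$, using $r\geq r_\mu$ and the defining inequality of $(x_\lambda,r_\lambda)\leq(x_\mu,r_\mu)$. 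Hence $r\with X(x_\lambda,x)\geq r_\lambda$, as required.

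Next I would show that $(x,r)$ is the least upper bound. Let $(y,s)$ be any upper bound, so $r_\mu\leq s\with X(x_\mu,y)$ for all $\mu$; the goal is $r\leq s\with X(x,y)$. Using the Yoneda-limit formula $X(x,y)=\bigvee_\lambda\bigwedge_{\mu\geq\lambda}X(x_\mu,y)$, distributivity of $\with$ over joins, and distributivity over the non-empty meets indexed by $\{\mu\mid\mu\geq\lambda\}$, I would compute $s\with X(x,y)=\bigvee_\lambda\bigwedge_{\mu\geq\lambda}\bigl(s\with X(x_\mu,y)\bigr)\geq\bigvee_\lambda\bigwedge_{\mu\geq\lambda}r_\mu=\bigvee_\lambda r_\lambda=r$, where the last two equalities use the preliminary observation. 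This gives $(x,r)\leq(y,s)$ and completes the proof.

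The step I expect to require the most care is the interchange of $\with$ with the infima $\bigwedge_{\mu\geq\lambda}$ in both parts: this is exactly where the hypothesis that $\with$ distributes over non-empty meets is indispensable, and, in its absence, Example~\ref{Example-G} shows the conclusion can fail. In applying the law one must keep each index set non-empty, which is automatic since $\{\mu\mid\mu\geq\lambda\}$ contains $\lambda$; integrality is likewise used twice, once for the monotonicity of $(r_\lambda)$ and once to secure $X(x,x)=1$ when extracting $\phi\leq X(-,x)$ from the colimit property.
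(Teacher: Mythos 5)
Your proof is correct, but the first half takes a genuinely different route from the paper's. For the upper-bound step the paper argues via continuity of the quantale: it takes $\epsilon\ll r_\lambda$, uses the Yoneda-limit equation at $y=x$ (namely $1=\bigvee_\delta\bigwedge_{\mu\geq\delta}X(x_\mu,x)$) together with distributivity over non-empty meets to find $\delta$ with $\epsilon\leq r_\lambda\with X(x_\mu,x)$ for $\mu\geq\delta$, and then lets $\epsilon$ range over the way-below approximants of $r_\lambda$. You instead invoke Proposition~\ref{yoneda limit as colimits} to view $x$ as a colimit of $\phi$, extract the inequality $\phi\leq X(-,x)$, and conclude $r\with X(x_\lambda,x)\geq\bigwedge_{\mu\geq\lambda}\bigl(r\with X(x_\lambda,x_\mu)\bigr)\geq r_\lambda$ by one application of distributivity over non-empty meets. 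This is a clean simplification: your argument nowhere uses continuity of the underlying lattice (no way-below relation appears), so it in fact establishes the lemma assuming only integrality and distributivity of $\with$ over non-empty meets, whereas the paper's argument genuinely consumes the continuity hypothesis. The price is the reliance on the colimit characterization of Yoneda limits, which the paper's proof of this lemma avoids (though the paper freely uses that proposition elsewhere). Your least-upper-bound step is essentially identical to the paper's. One slip in your closing commentary: Example~\ref{Example-G} does \emph{not} witness failure of the conclusion when distributivity over non-empty meets is dropped --- every continuous t-norm distributes over non-empty meets, so that hypothesis holds in Example~\ref{Example-G}; what fails there is the lemma's hypothesis that $(x_\lambda)_{\lambda\in D}$ is forward Cauchy (as the paper notes right after the example). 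This does not affect the validity of your proof, but the attribution should be removed or corrected.
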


\begin{proof}The proof is   a slight improvement of that for  Lemma 7.7 in \cite{KW2011}.

 First, we show that $(x,r)$ is an upper bound of $(x_\lambda,r_\lambda)_{\lambda\in D}$; that is, $r_\lambda\leq r\with X(x_\lambda,x)$ for all $\lambda\in D$.

For each $\lambda\in D$ and each $\epsilon\ll r_\lambda$, since $\with$ distributes over non-empty meets and \[1=X(x,x)= \bigvee_\delta\bigwedge_{\mu\geq\delta}X(x_\mu,x),\]   there is some $\delta\in D$ such that $\epsilon\leq r_\lambda\with X(x_\mu,x)$ whenever $\mu\geq\delta$.  Thus, for all $\mu\geq\lambda,\delta$, we have \begin{align*}\epsilon&\leq r_\lambda\with X(x_\mu,x) \leq r_\mu\with X(x_\lambda,x_\mu)\with X(x_\mu,x) \leq r\with X(x_\lambda,x).\end{align*} By   arbitrariness of $\epsilon$  we obtain that $r_\lambda\leq r\with X(x_\lambda,x)$.

Next  we show that $(x,r)\leq (y,s)$ for any upper bound $(y,s)$ of $(x_\lambda,r_\lambda)_{\lambda\in D}$.
Since $(y,s)$ is an upper bound  of $(x_\lambda,r_\lambda)_{\lambda\in D}$,
then  $r_\lambda\leq r_\mu\leq s\with X(x_\mu,y)$ whenever $\lambda\leq\mu$, hence \begin{align*} r&=\bigvee_{\lambda\in D}r_\lambda  \leq \bigvee_{\lambda\in D}\bigwedge_{\mu\geq\lambda}s\with X(x_\mu,y)  = s\with X(x,y),\end{align*} which shows that $(x,r)\leq (y,s)$, as desired. \end{proof}

\begin{prop}\label{directed set with radius 1} Let ${\sf Q}$ be a continuous and integral quantale such that $\with$ distributes over non-empty meets. If $X$ is a Yoneda complete ${\sf Q}$-category, then every directed subset $(x_\lambda,r_\lambda)_{\lambda\in D}$ of $\mathbf{B}X $ with $\bigvee_{\lambda\in D}r_\lambda=1$ has a join. \end{prop}

\begin{proof}This follows directly from Lemma \ref{directed set is forward Cauchy} and  Lemma \ref{join of directed set in BX}. \end{proof}
\begin{lemma}\label{expansion stable} Let ${\sf Q}$ be a continuous and integral quantale. Then for all $x$ and $y$ of a  ${\sf Q}$-category $X$  with property {\rm(R)}, the following conditions are equivalent: \begin{enumerate}[label=\rm(\arabic*)]\setlength{\itemsep}{0pt}
 \item $(x,1)\leq(y,1)$. \item $(x,s)\leq(y,s)$ for all $s\not=0$. \item $(x,s)\leq(y,s)$ for some $s\not=0$.  \end{enumerate}\end{lemma}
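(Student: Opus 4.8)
The plan is to prove the cycle $(1)\Rightarrow(2)\Rightarrow(3)\Rightarrow(1)$, with the first two implications essentially formal and the last one being where property (R) does all the work.

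First I would record the translation of each condition into the quantale. Since ${\sf Q}$ is integral, its unit $k$ coincides with the top element $1$, so $1\with q=q$ for every $q$ and $p\with q\leq p$ for all $p,q$. Thus $(x,r)\leq(y,s)$ unfolds to $r\leq s\with X(x,y)$; in particular $(x,1)\leq(y,1)$ is equivalent to $X(x,y)=1$, while $(x,s)\leq(y,s)$ is equivalent to $s\leq s\with X(x,y)$, i.e.\ (using integrality) to $s\with X(x,y)=s$. With this dictionary, $(1)\Rightarrow(2)$ is immediate: if $X(x,y)=1$ then $s\with X(x,y)=s\with 1=s$ for every $s$, so $(x,s)\leq(y,s)$ for all $s\neq0$; this uses only integrality, not property (R). The implication $(2)\Rightarrow(3)$ is trivial, taking $s=k=1$, which is a nonzero element (here I silently assume ${\sf Q}$ is nontrivial, $0\neq1$, so that a nonzero $s$ exists).

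The crux is $(3)\Rightarrow(1)$, and this is the only place property (R) enters. Suppose $(x,s)\leq(y,s)$ for some $s\neq0$, i.e.\ $s\leq s\with X(x,y)$. I would apply property (R) to the pair $(s,s)$, which is legitimate since $0<s\leq s$, obtaining some $r\ll1$ such that for all $r'\geq r$,
\[(x,s\with r')\leq(y,s)\iff(x,r')\leq(y,s\rightarrow s).\]
The key observation is that $s\rightarrow s=1$: from $s\with 1=s\leq s$ and the adjunction $s\with w\leq s\iff w\leq s\rightarrow s$ we get $1\leq s\rightarrow s$, hence equality. So the right-hand side reads $(x,r')\leq(y,1)$, that is $r'\leq X(x,y)$. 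Meanwhile the left-hand side holds automatically: by integrality $s\with r'\leq s$, and by hypothesis $s\leq s\with X(x,y)$, so $s\with r'\leq s\with X(x,y)$, i.e.\ $(x,s\with r')\leq(y,s)$. Therefore $r'\leq X(x,y)$ for every $r'\geq r$; specializing to $r'=1$, which satisfies $1\geq r$ since $r\ll1$ forces $r\leq1$, yields $X(x,y)=1$, that is $(x,1)\leq(y,1)$.

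The main point, and the only genuinely nontrivial step, is the choice of the pair $(s,s)$ in property (R): this collapses the second radius via $s\rightarrow s=1$, thereby erasing the dependence on $s$ on the right-hand side, while the left-hand side becomes vacuous by integrality together with the hypothesis. I do not expect any further obstacle; everything else is bookkeeping with the unit-is-top property of an integral quantale, and continuity of ${\sf Q}$ is used only implicitly, through the $r\ll1$ furnished by property (R).
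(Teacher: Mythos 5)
Your proposal is correct and follows essentially the same route as the paper: the whole content is $(3)\Rightarrow(1)$, obtained by applying property (R) to the pair $(s,s)$, observing $s\rightarrow s=1$ (by integrality), and putting $r'=1$, exactly as the paper does. Your extra bookkeeping (spelling out $(1)\Rightarrow(2)\Rightarrow(3)$ and verifying the left-hand side for all $r'\geq r$ rather than just $r'=1$) is harmless and does not change the argument.
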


\begin{proof}It suffices to check $(3)\Rightarrow(1)$. Since $0<s\leq s$, there is some $r\ll1$ such that  \[(x,s\with r')\leq (y,s)\iff(x,r')\leq(y,s\rightarrow s)\] for all $r'\geq r$. Putting $r'=1$ gives that $(x,1)\leq(y,1)$. \end{proof}

\begin{prop} \label{Archi vs reciprocal}  Let $\with$ be a continuous t-norm on $[0,1]$ and let ${\sf Q}=([0,1],\with,1)$. Then,   $\with$ is Archimedean if and only if every ${\sf Q}$-category has property {\rm(R)}.  \end{prop}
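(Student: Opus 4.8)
The plan is to first rewrite property (R) in purely quantale-theoretic terms. Unravelling $\le_{\mathbf{B}X}$, for a pair $(s,t)$ with $0<s\le t$ and points $x,y$ with $a=X(x,y)$, the biconditional in Definition \ref{defn of R} becomes
\[ t\with r'\le s\with a \iff r'\le (t\rightarrow s)\with a. \]
The implication ``$\Leftarrow$'' holds in every quantale: if $r'\le (t\rightarrow s)\with a$, then $t\with r'\le \big(t\with(t\rightarrow s)\big)\with a\le s\with a$, using only monotonicity of $\with$ and $t\with(t\rightarrow s)\le s$. Hence the entire force of property (R) is carried by the forward implication ``$\Rightarrow$''. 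Since the realized distances $X(x,y)$ form a subset of $[0,1]$, to prove that Archimedeanness yields property (R) for \emph{all} ${\sf Q}$-categories it suffices to produce, for each $(s,t)$, a single $r\ll 1$ for which the forward implication holds for every $a\in[0,1]$ and every $r'\ge r$.

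For the direction ``$\with$ Archimedean $\Rightarrow$ every ${\sf Q}$-category has property (R)'' I would split on whether $s<t$ or $s=t$. When $s<t$, integrality gives $t\rightarrow s<1$ (indeed $t\rightarrow s=1\iff t\le s$), so I would fix $r$ with $t\rightarrow s<r<1$. For such $r$, any $r'\ge r$, and any $a$, monotonicity of the implication gives $t\rightarrow(s\with a)\le t\rightarrow s<r\le r'$, and also $(t\rightarrow s)\with a\le t\rightarrow(s\with a)<r'$; hence both $r'\le t\rightarrow(s\with a)$ (equivalently $t\with r'\le s\with a$) and $r'\le (t\rightarrow s)\with a$ fail, so the biconditional holds with both sides false. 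The case $s=t$ is where Archimedeanness is genuinely needed: here $t\rightarrow s=1$, so $(t\rightarrow s)\with a=a$ and the forward implication is exactly the cancellation law $t\with r'\le t\with a\Rightarrow r'\le a$. Since $\with$ is continuous and $t\with 1=t>0$, I would choose $r\ll 1$ so that $t\with r'>0$ for all $r'\ge r$. Then, applying the fact about continuous Archimedean t-norms used in the proof of Proposition \ref{Archi vs FC} (if $0<p\le q\with w$ then $w\ge q\rightarrow p$) with $p=t\with r'$, $q=t$, $w=a$, I obtain $a\ge t\rightarrow(t\with r')\ge r'$, the last step by the adjunction; this is precisely $r'\le a$.

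For the converse I would argue by contraposition: assuming $\with$ non-Archimedean, I will exhibit a ${\sf Q}$-category without property (R). By the characterization recalled in the Preliminaries there is an idempotent $b\in(0,1)$, and then $b\with r'=b$ for every $r'\ge b$, since $b=b\with b\le b\with r'\le b\with 1=b$. Take $X=\{x,y\}$ with $X(x,x)=X(y,y)=1$ and $X(x,y)=X(y,x)=b$; the triangle inequality holds because $b\with b=b\le 1$. Testing property (R) at the admissible pair $(s,t)=(b,b)$, where $t\rightarrow s=b\rightarrow b=1$, the biconditional reduces to $b\with r'\le b\iff r'\le b$. Given any candidate $r\ll 1$, choose $r'$ with $\max\{r,b\}<r'<1$; then $r'\ge r$, the left-hand side $b\with r'\le b$ holds (as $b\with r'=b$), while the right-hand side $r'\le b$ fails. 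Thus no $r\ll 1$ witnesses property (R) for $(b,b)$, so $X$ lacks property (R).

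The main obstacle, and the conceptual heart of the statement, is the case $s=t$: there property (R) is exactly the cancellation requirement $t\with r'\le t\with a\Rightarrow r'\le a$ for radii $r'$ near the top, which is precisely what Archimedeanness supplies (through the quoted t-norm fact) and what an idempotent destroys. The case $s<t$ is essentially automatic from integrality and does not distinguish Archimedean from non-Archimedean t-norms; the only points needing care are checking that it imposes no further constraint and that the threshold $r$ can be chosen uniformly in $a$ and $r'$.
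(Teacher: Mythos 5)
Your proof is correct, and its core direction (Archimedean $\Rightarrow$ property (R)) takes a genuinely different route from the paper. The paper proves this direction by invoking the classification theorem for continuous Archimedean t-norms (isomorphic to {\L}ukasiewicz or product), then verifying property (R) by explicit computation for the {\L}ukasiewicz t-norm only, leaving the product case to the reader. You instead give a uniform, classification-free argument: after observing that the backward implication in (R) holds in any quantale by adjunction, you split on $s<t$ versus $s=t$. For $s<t$ you show both sides of the biconditional are false once $r>t\rightarrow s$ (using only integrality, so this case imposes no constraint on the t-norm), and for $s=t$ you reduce (R) to the conditional cancellation law $t\with r'\le t\with a\Rightarrow r'\le a$ for $t\with r'>0$, which follows from the same fact about continuous Archimedean t-norms ($0<p\le q\with w\Rightarrow w\ge q\rightarrow p$) that the paper itself invokes without proof in Proposition \ref{Archi vs FC}. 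This buys two things: both Archimedean cases are handled at once with nothing left to the reader, and the argument isolates exactly where Archimedeanness matters (the diagonal pairs $s=t$). For the converse, your contrapositive argument via an idempotent $b\in(0,1)$ and the two-point category with $X(x,y)=X(y,x)=b$ is essentially the paper's sufficiency proof (the paper phrases it directly, via Lemma \ref{expansion stable}, showing $s\le s\with q$ forces $q=1$); the counterexample is the same, only the logical packaging differs.
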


\begin{proof}For sufficiency we need to show that $\with$ has no nontrivial idempotent element. For this it suffices to show that for all $s\not=0$ and $q\in[0,1]$, if $s\leq s\with q$ then $q=1$. Consider the $\sf Q$-category  $X=\{x,y\}$ with $X(x,x)=X(y,y)=1$ and $X(x,y)=q=X(y,x)$. Since $(x,s)\leq (y,s)$ and $X$ has property   {\rm(R)}, then $(x,1)\leq(y,1)$, hence $1\leq 1\with X(x,y)=q$.

As for necessity, assume that $\with$ is a continuous Archimedean t-norm. Then $\with$ is either isomorphic to the \L ukasiewicz t-norm or to the product t-norm. In the following we check the conclusion for the case that $\with$ is isomorphic to the  \L ukasiewicz t-norm, leaving the other case  to the reader.

Without loss of generality, we assume that $\with$ is, not only isomorphic to, the  \L ukasiewicz t-norm; that is, $$x\with y=\max\{0,x+y-1\}.$$  Suppose that $X$ is a ${\sf Q}$-category and $0<s\leq t$. If $t=1$,   it is trivial  that \[(x,t\with r')\leq (y,s)\iff(x,r')\leq(y,t\rightarrow s) \]  for all $r'>0$, so each $r>0$ satisfies the requirement. If $t<1$, pick $r\in(1-t,1)$. Then $r\ll1$ and for all $r'\geq r$, \begin{align*}  (x,t\with r')\leq (y,s)  &\iff r'+t-1\leq s+X(x,y)-1 \\ &\iff r'\leq s-t+X(x,y)\\ &\iff (x,r')\leq(y,t\rightarrow s),\end{align*}   completing the proof.
 \end{proof}
By the ordinal sum decomposition theorem of continuous t-norms \cite{KMP00} and the argument of Proposition \ref{Archi vs reciprocal}, one readily verifies the following conclusion.
\begin{cor}\label{characeterizing R} Let ${\sf Q}=([0,1],\&,1)$, where $\&$ is a continuous t-norm on $[0,1]$. Then, a $\sf Q$-category $X$ has property {\rm (R)} if and only if it satisfies the following condition: for all $x,y\in X$, if $X(x,y)\geq p$  for some idempotent element $p>0$, then $X(x,y)=1$. \end{cor}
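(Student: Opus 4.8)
The plan is to establish the two implications separately. The direction that is almost immediate is that property (R) forces the stated condition, and it is exactly the mechanism used for the sufficiency half of Proposition \ref{Archi vs reciprocal}. Suppose $X$ has property (R) and that $X(x,y)\geq p$ for some idempotent $p>0$. Then $p\with X(x,y)\geq p\with p=p$ by monotonicity and idempotency, while $p\with X(x,y)\leq p$ by integrality; hence $p\leq p\with X(x,y)$, i.e. $(x,p)\leq(y,p)$ in $\mathbf{B}X$. Since $p\neq 0$, Lemma \ref{expansion stable} yields $(x,1)\leq(y,1)$, that is $1\leq X(x,y)$, so $X(x,y)=1$.

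For the converse I would bring in the ordinal sum decomposition, using two of its consequences: every idempotent $e$ of a continuous t-norm satisfies $e\with z=\min\{e,z\}$, and the maximal open intervals free of idempotents carry Archimedean summands. Let $p_0$ be the infimum of the positive idempotents. If $p_0=1$ then $\with$ is Archimedean, the stated condition is vacuous, and the conclusion is Proposition \ref{Archi vs reciprocal}; so assume $0\leq p_0<1$, in which case $(0,p_0)$ (if nonempty) is an Archimedean summand with top $p_0$. The stated condition forces every value $X(x,y)$ to be either $1$ or strictly below $p_0$ (and every non-unit value to be $0$ when $p_0=0$). Now fix $0<s\leq t$. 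From $t\with(t\rightarrow s)=\min\{t,s\}=s$ one gets $(x,r')\leq(y,t\rightarrow s)\Rightarrow(x,t\with r')\leq(y,s)$ for every radius and every value, so only the reverse implication is at stake. For the value $q=X(x,y)=1$ the required equivalence is just residuation. For a value $q\neq 1$ I would choose $r<1$ with $r\geq p_0$ and with $t\with r$ lying above $p_0$ (above $0$ in the degenerate case $p_0=0$), which continuity of $\with$ makes possible, and then show that for every $r'\geq r$ both $(x,t\with r')\leq(y,s)$ and $(x,r')\leq(y,t\rightarrow s)$ fail: the latter because $(t\rightarrow s)\with q\leq q<r'$, and the former because $t\with r'$ exceeds $s\with q$ --- when $t\leq p_0$ one has $t\with r'=t$ by idempotency of $p_0$ together with $s\with q<t$, while when $t>p_0$ one has $t\with r'>p_0\geq q\geq s\with q$. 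With both relations false the equivalence holds vacuously, which is property (R).

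I expect the sufficiency direction to be the main obstacle, and within it the point requiring care is that a \emph{single} radius $r<1$, depending only on the pair $(s,t)$, must annihilate both relations simultaneously for every admissible value $q$. This is exactly where the ordinal sum picture is indispensable: it guarantees that $(0,p_0)$ is a genuine Archimedean summand, so that the strict inequality $s\with q<t$ holds when $s=t\leq p_0$ (via $q^{n}\to 0$ and continuity, since $q<p_0$ is not the summand unit), and it licenses the cross-idempotent computations $e\with z=\min\{e,z\}$ and $t\with r'=t$. Once these structural facts are granted, the remaining work --- the case split on whether $t$ lies below or above $p_0$, together with the degenerate case $p_0=0$ --- is routine.
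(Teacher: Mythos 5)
Your proof is correct and follows exactly the route the paper's one-sentence proof indicates: your necessity half replays the mechanism of Proposition~\ref{Archi vs reciprocal} via Lemma~\ref{expansion stable} (from $X(x,y)\geq p$ idempotent one gets $(x,p)\leq(y,p)$, hence $(x,1)\leq(y,1)$), and your sufficiency half uses the ordinal sum decomposition, with residuation handling the value $1$ and your uniform radius $r$ making both sides of the required equivalence fail for every value below the least positive idempotent $p_0$, which is precisely the generalization of the Archimedean computation in Proposition~\ref{Archi vs reciprocal}. The only wording to tidy is the choice of $r$: the requirement $t\with r>p_0$ is (and can only be) imposed in the case $t>p_0$, as your subsequent case split in fact assumes, since $t\with r\leq t\leq p_0$ otherwise.
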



\begin{lemma}\label{mainlemma}Suppose that ${\sf Q}=({\sf Q},\with,k)$ is a continuous and integral quantale such that $\with$ distributes over non-empty meets. Let $X$ be a ${\sf Q}$-category; let $a$ be an element of $X$    and let $(x_\lambda,r_\lambda)_{\lambda\in D}$ be a directed subset of $\mathbf{B}X $ for which $\bigvee_{\lambda\in D}r_\lambda=1$. Consider the statements: \begin{enumerate}[label=\rm(\arabic*)]\setlength{\itemsep}{0pt} \item $a$ is a Yoneda limit  of $(x_\lambda)$. \item   $(a,1)$ is a join of $(x_\lambda,r_\lambda)_{\lambda\in D}$.\end{enumerate} Then, $(1)$ implies $(2)$. Further, if $X$ has property {\rm(R)} and $\mathbf{B} X$ is directed complete,  $(2)$ also implies $(1)$.
\end{lemma}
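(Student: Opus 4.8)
The plan is to reduce both implications to the single identity $X(a,y)=\bv_\lambda\bw_{\mu\geq\lambda}X(x_\mu,y)$ for every $y\in X$, which is exactly the assertion that $a$ is a Yoneda limit of $(x_\lambda)$. The implication $(1)\Rightarrow(2)$ then requires essentially no new work: since $\bv_{\lambda}r_\lambda=1$, Lemma \ref{directed set is forward Cauchy} shows that $(x_\lambda)$ is forward Cauchy, so if $a$ is a Yoneda limit, Lemma \ref{join of directed set in BX} already gives that $(a,\bv_\lambda r_\lambda)=(a,1)$ is a join of $(x_\lambda,r_\lambda)_{\lambda\in D}$.

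For $(2)\Rightarrow(1)$, fix $y$ and abbreviate $\psi(y)=\bv_\lambda\bw_{\mu\geq\lambda}X(x_\mu,y)$. I expect the inequality $X(a,y)\leq\psi(y)$ to use only that $(a,1)$ is an \emph{upper} bound: this gives $r_\mu\leq X(x_\mu,a)$, and the triangle law $X(a,y)\with X(x_\mu,a)\leq X(x_\mu,y)$ then yields $X(x_\mu,y)\geq X(a,y)\with r_\mu$. Distributing $\with$ over the non-empty meet $\bw_{\mu\geq\lambda}$ (this is where the standing hypothesis on $\with$ enters) and over the join, together with $\bw_{\mu\geq\lambda}r_\mu=r_\lambda$, should give $\psi(y)\geq X(a,y)\with\bv_\lambda r_\lambda=X(a,y)$, so this direction needs neither property {\rm(R)} nor directed completeness.

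The substance is the reverse inequality $X(a,y)\geq\psi(y)$, where property {\rm(R)}, directed completeness, and the \emph{least}-upper-bound property of $(a,1)$ all come in. By continuity of ${\sf Q}$ it is enough to prove $X(a,y)\geq s$ for every $s\neq0$ with $s\ll\psi(y)$; and since $\big(\bw_{\mu\geq\lambda}X(x_\mu,y)\big)_\lambda$ is directed with join $\psi(y)$, any such $s$ satisfies $X(x_\mu,y)\geq s$ for all $\mu$ beyond some $\lambda_0$. Everything thus reduces to the claim: if $s>0$ and $X(x_\mu,y)\geq s$ eventually, then $X(a,y)\geq s$. To prove this I would pass to the rescaled family $(x_\mu,s\with r_\mu)_{\mu\geq\lambda_0}$, which is again directed and, as $\mathbf{B}X$ is directed complete, has a join $(b,v)$. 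Two cheap observations are that $(a,s)$ is an upper bound of this family (from $r_\mu\leq X(x_\mu,a)$), which forces $(b,v)\leq(a,s)$ and hence $0<v\leq s$, and that $(y,1)$ is also an upper bound of it (because $s\with r_\mu\leq s\leq X(x_\mu,y)$). Since $X(a,y)\geq s$ is the same as $(a,s)\leq(y,1)$, it suffices to identify $(a,s)$ as the join, i.e. to establish $(a,s)\leq(b,v)$.

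This last identification is the main obstacle, and property {\rm(R)} is tailored precisely for it. The difficulty is that the radius of $(a,1)$ is the top element, so no ball centred at $y$ can be an upper bound of the \emph{original} family, and one cannot read off $X(a,y)$ directly; the device is to transfer information between the original family and its rescaling. Applying property {\rm(R)} to the pair $(v,s)$—legitimate exactly because $0<v\leq s$—should turn the upper-bound relation $(x_\mu,s\with r_\mu)\leq(b,v)$ into $(x_\mu,r_\mu)\leq(b,s\rightarrow v)$ for all large $\mu$ (those with $r_\mu\geq r$, which exist since $\bv_\mu r_\mu=1$). Then $(b,s\rightarrow v)$ is an upper bound of a cofinal tail, hence of the whole original family, and the least-upper-bound property of $(a,1)$ forces $(a,1)\leq(b,s\rightarrow v)$. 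Because ${\sf Q}$ is integral, $1\leq(s\rightarrow v)\with X(a,b)$ forces both $s\rightarrow v=1$ (that is, $v=s$) and $X(a,b)=1$, whence $(a,s)\leq(b,v)$, as needed. I would finally dispose of the degenerate cases $\psi(y)=0$ and $s=1$ separately, but I expect these to be routine.
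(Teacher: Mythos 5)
Your proof is correct, and its skeleton coincides with the paper's: for the hard inequality both arguments rescale the given family, take the join of the rescaled family by directed completeness, and identify that join with the corresponding ball centred at $a$ by combining property {\rm(R)}, the least-upper-bound property of $(a,1)$, and integrality of ${\sf Q}$ (forcing $s\rightarrow v=1$ and $X(a,b)=1$), exactly as the paper does. The one real difference is the choice of scaling factor: the paper rescales by $t=\bigvee_\lambda\bigwedge_{\mu\geq\lambda}X(x_\mu,y)$ itself, over all of $D$, and must then verify that $(y,1)$ is an upper bound of $(x_\lambda,t\with r_\lambda)_{\lambda\in D}$ by a several-line computation invoking distributivity of $\with$ over non-empty meets; you instead rescale by a nonzero way-below approximant $s\ll\psi(y)$ over a tail on which $X(x_\mu,y)\geq s$, which makes that verification a one-liner, at the cost of invoking continuity of ${\sf Q}$ and a (correct) cofinality argument --- an upper bound of a cofinal tail of $(x_\lambda,r_\lambda)_{\lambda\in D}$ is an upper bound of the whole family, which still has join $(a,1)$. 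Both devices are covered by the standing hypotheses, so this is a fair trade; in your version distributivity over non-empty meets survives only in the easy inequality $X(a,y)\leq\psi(y)$, and even there it is avoidable, since $\bigwedge_{\mu\geq\lambda}\bigl(X(a,y)\with r_\mu\bigr)\geq X(a,y)\with r_\lambda$ already follows from monotonicity of $\with$. One small slip in wording: $(b,v)\leq(a,s)$ alone does not force $v>0$; positivity of $v$, which you need in order to apply property {\rm(R)} to the pair $(v,s)$, comes from $(b,v)$ being an upper bound of the rescaled family, giving $v\geq\bigvee_{\mu}\bigl(s\with r_\mu\bigr)=s\with 1=s>0$. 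This is a one-line fix (and in fact it already yields $v=s$ outright); the paper's own proof elides the analogous point when it asserts $0<s\leq t$.
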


\begin{proof}That (1) implies (2) follows from Lemma \ref{directed set is forward Cauchy} and Lemma \ref{join of directed set in BX}. Now,   assume that $X$ has property {\rm(R)}, $\mathbf{B} X$ is directed complete, and that $(a,1)$ is a join of $(x_\lambda,r_\lambda)_{\lambda\in D}$. We show that $a$ is a   Yoneda limit  of $(x_\lambda)$; that means, for all $y\in X$, \[X(a,y)=\bigvee_{\lambda\in D}\bigwedge_{\mu\geq\lambda}X(x_\mu,y).\]

Fix $\lambda\in D$. Since for all $\mu\geq\lambda$, \begin{align*}r_\lambda\with X(a,y)&\leq r_\mu\with X(a,y) 
 \leq X(x_\mu,a)\with X(a,y)
 \leq X(x_\mu,y),\end{align*} it follows that $$r_\lambda\with X(a,y)\leq \bigwedge_{\mu\geq\lambda}X(x_\mu,y),$$  hence  \begin{align*}X(a,y)&=\bigvee_{\lambda\in D}r_\lambda\with X(a,y)  \leq \bigvee_{\lambda\in D}\bigwedge_{\mu\geq\lambda}X(x_\mu,y).\end{align*}

For the converse inequality, let $$t=\bigvee_{\lambda\in D}\bigwedge_{\mu\geq\lambda}X(x_\mu,y).$$ We wish to show that $t\leq X(a,y)$. We may assume that $t>0$.

It is clear that  $(x_\lambda, t\with r_\lambda)_{\lambda\in D}$ is a directed subset of $\mathbf{B} X$, hence has a join, say $(z,s)$. We claim that $(z,s)\cong(a,t)$; that is, $(z,s)\leq (a,t)$ and $(a,t)\leq (z,s)$.
Since $(a,t)$ is an upper bound of the directed set $(x_\lambda, t\with r_\lambda)_{\lambda\in D}$, it follows that $(z,s)\leq (a,t)$; in particular $0<s\leq t$. Since $\bigvee_{\lambda\in D}r_\lambda=1$, we may assume that  all $r_\lambda$ are large enough. Since $(z,s)$ is a join of $(x_\lambda, t\with r_\lambda)_{\lambda\in D}$, then $(x_\lambda,t\with r_\lambda)\leq(z,s)$ for all $\lambda\in D$, then $(x_\lambda, r_\lambda)\leq(z,t\rightarrow s)$ for all $\lambda\in D$ because $X$ has property {\rm(R)}, and then  $(a,1)\leq(z,t\rightarrow s)$ because $(a,1)$ is a join of $(x_\lambda,r_\lambda)_{\lambda\in D}$. Therefore,   $t=s$ and $(a,1)\leq (z,1)$, hence  $(a,t)\leq (z,t)=(z,s).$  This proves that $(z,s)\cong(a,t)$.

For each $\lambda\in D$, since  \begin{align*}r_\lambda\with t&= r_\lambda\with\bigvee_{\gamma\in D}\bigwedge_{\mu\geq\gamma}X(x_\mu,y) \\
&= r_\lambda\with\bigvee_{\gamma\geq\lambda}\bigwedge_{\mu\geq\gamma}X(x_\mu,y)\quad\quad\text{($D$ is directed)} \\
&\leq \bigvee_{\gamma\geq\lambda}\bigwedge_{\mu\geq\gamma}r_\lambda\with X(x_\mu,y) \\
&\leq \bigvee_{\gamma\geq\lambda}\bigwedge_{\mu\geq\gamma}r_\mu\with X(x_\lambda,x_\mu)\with X(x_\mu,y)\\ &\leq X(x_\lambda,y),\end{align*}   it follows that $(x_\lambda, t\with r_\lambda)\leq (y,1)$. Thus, $(y,1)$ is an upper bound of the set $(x_\lambda, t\with r_\lambda)_{\lambda\in D}$, hence $(a,t)\leq(y,1)$, and then $t\leq X(a,y)$. \end{proof}


\begin{proof}[Proof of Theorem \ref{main}] (i) This is   $(1)  \Rightarrow (2)$ in Lemma \ref{mainlemma}.

(ii)  We show that every forward Cauchy weight $\phi$ of $X$ has  a colimit.  By Lemma \ref{characterization of Cauchy ideal}, there is a directed subset $(x_\lambda,r_\lambda)_{\lambda\in D}$   of $\mathbf{B}X $ such that $\bigvee_{\lambda\in D}r_\lambda=1$ and that  $$\phi=\bigvee_{\lambda\in D}\bigwedge_{\mu\geq \lambda} X(-,x_\mu).$$ By assumption, $(x_\lambda,r_\lambda)_{\lambda\in D}$ has a join, say $(a,1)$. By Lemma \ref{mainlemma}, $a$ is a Yoneda limit of the forward Cauchy net $(x_\lambda)_{\lambda\in D}$, hence a colimit of $\phi$ by Proposition \ref{yoneda limit as colimits}. \end{proof}

\begin{cor}\label{maincor} Let $\with$ be a continuous t-norm on $[0,1]$ and let ${\sf Q}=([0,1],\with,1)$. Then the following are equivalent: \begin{enumerate}[label=\rm(\arabic*)]\setlength{\itemsep}{0pt} \item   $\with$ is Archimedean. \item For each ${\sf Q}$-category $X$,   $X$ is Yoneda complete if and only if  $\mathbf{B}X $ is directed complete.
\end{enumerate} \end{cor}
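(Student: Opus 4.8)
The plan is to read off both implications from the machinery already assembled, after noting that $([0,1],\with,1)$ is a continuous and integral quantale and that every continuous t-norm distributes over non-empty meets, so that Theorem \ref{main} together with Propositions \ref{Archi vs FC} and \ref{Archi vs reciprocal} all apply to $X$.

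For the direction $(1)\Rightarrow(2)$, I would assume $\with$ is Archimedean and fix a ${\sf Q}$-category $X$, then prove the biconditional in (2) one arrow at a time. To show that Yoneda completeness forces $\mathbf{B}X$ to be directed complete, I take an arbitrary directed subset $(x_\lambda,r_\lambda)_{\lambda\in D}$ and put $r=\bigvee_{\lambda\in D}r_\lambda$. If $r=0$, every radius is $0$ and any single ball of radius $0$ is immediately checked to be a join, so I may assume $r>0$, whence some $r_\lambda>0$. Proposition \ref{Archi vs FC} then makes $(x_\lambda)_{\lambda\in D}$ forward Cauchy; Yoneda completeness supplies a Yoneda limit $x$, and Lemma \ref{join of directed set in BX} delivers the join $(x,r)$. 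Hence $\mathbf{B}X$ is directed complete. Conversely, if $\mathbf{B}X$ is directed complete, then since $\with$ is Archimedean Proposition \ref{Archi vs reciprocal} tells us that $X$ has property {\rm(R)}, so Theorem \ref{main}(ii) gives Yoneda completeness. This establishes (2) for every $X$.

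For $(2)\Rightarrow(1)$, I would argue by contraposition: if $\with$ is non-Archimedean, then the ${\sf Q}$-category of Example \ref{Example-G} is Yoneda complete while its set of formal balls carries a directed set with no join, so $\mathbf{B}X$ is not directed complete. This single $X$ already falsifies the equivalence asserted in (2), so (2) cannot hold.

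I do not anticipate a serious obstacle, since the statement is in essence a bookkeeping assembly of the earlier results; the point needing care is the reduction to $r>0$ in $(1)\Rightarrow(2)$. Theorem \ref{main}(i) only covers directed sets with $\bigvee_{\lambda\in D}r_\lambda=1$, and the genuine gain is that in the Archimedean setting Proposition \ref{Archi vs FC} upgrades \emph{any} directed set possessing a positive radius to a forward Cauchy net, which lets Lemma \ref{join of directed set in BX} produce a join at the possibly non-top radius $r$. It is precisely this step that invokes the Archimedean hypothesis rather than merely the continuity and integrality of the quantale.
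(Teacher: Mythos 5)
Your proposal is correct and follows essentially the same route as the paper: Proposition \ref{Archi vs FC} plus Lemma \ref{join of directed set in BX} for one arrow of (2), Proposition \ref{Archi vs reciprocal} plus Theorem \ref{main}(ii) for the other, and Example \ref{Example-G} for $(2)\Rightarrow(1)$. Your explicit treatment of the degenerate case $r=\bigvee_{\lambda\in D}r_\lambda=0$ (where any ball of radius $0$ is a join) is a small detail the paper leaves implicit, and it is handled correctly.
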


\begin{proof}$(1)\Rightarrow(2)$ If $X$ is Yoneda complete, then   $\mathbf{B}X $ is directed complete by Proposition \ref{Archi vs FC} and Lemma \ref{join of directed set in BX}. Conversely, if $\mathbf{B}X $ is directed complete, then $X$ is Yoneda complete by  Proposition \ref{Archi vs reciprocal} and Theorem \ref{main}.

$(2)\Rightarrow(1)$  Example \ref{Example-G}.\end{proof}

The following example shows that  in Theorem \ref{main}\thinspace(ii),  the requirement that $\mathbf{B}X $ is directed complete cannot be weakened to that every directed subset $(x_\lambda,r_\lambda)_{\lambda\in D}$ of $\mathbf{B}X $ with $\bigvee_{\lambda\in D}r_\lambda=1$ has a join.

\begin{example} \label{second example} Let ${\sf Q}$ be the quantale ${\sf Q}=([0,1],\with_P,1)$, where $\with_P$ is the product t-norm. By Proposition \ref{Archi vs reciprocal} every $\sf Q$-category has property (R). We claim that there is a  $\sf Q$-category $X$ such that every directed subset $(x_\lambda,r_\lambda)_{\lambda\in D}$ of $\mathbf{B}X $ with $\bigvee_{\lambda\in D}r_\lambda=1$ has a join, but $X$ is not Yoneda complete. Since  $([0,1],\with_P,1)$ is isomorphic to Lawvere's quantale $([0,\infty]^{\rm op},+,0)$, it suffices to construct a quasi-metric space $(X,d)$ such that $(X,d)$ is not Yoneda complete, but every directed subset $(x_\lambda,r_\lambda)_{\lambda\in D}$   of $\mathbf{B} X$ with $\inf_{\lambda\in D}r_\lambda=0$ has a join.

Let $X=\{0\}\cup\{1/n\mid n\geq2\}$. Define a quasi-metric $d$ on $X$ by \[d(x,y)=\begin{cases}1/2 &x=0, y\not=0,\\ \max\{0,y-x\}&{\rm otherwise}.\end{cases}\] Then $(X,d)$ satisfies the requirements.

(i) $(X,d)$ is not Yoneda complete, since the sequence $(1/n)_{n\geq2}$ is forward Cauchy but has no Yoneda limit.

(ii) We show that every directed subset $(x_\lambda,r_\lambda)_{\lambda\in D}$   of $\mathbf{B}X$ with $\inf_{\lambda\in D}r_\lambda=0$ has a join. Since $\inf_{\lambda\in D}r_\lambda=0$, it follows from Lemma \ref{directed set is forward Cauchy} that $(x_\lambda)_{\lambda\in D}$ is a forward Cauchy net of $(X,d)$, hence either  $(x_\lambda)_{\lambda\in D}$ is  eventually constant or  $(x_\lambda)_{\lambda\in D}$ converges to $0$ (in the usual sense). 

Case 1. $(x_\lambda)_{\lambda\in D}$ converges to $0$. In this case we show that $(0,0)$ is a join of $(x_\lambda,r_\lambda)_{\lambda\in D}$. First, since $d(z,0)=0$ for all $z\in X$, then $r_\lambda\geq 0+d(x_\lambda,0)$ for all $\lambda\in D$, hence $(0,0)$ is an upper bound of $(x_\lambda,r_\lambda)_{\lambda\in D}$. Next, assume that $(y,s)$ is an upper bound of   $(x_\lambda,r_\lambda)_{\lambda\in D}$. It is clear that $s=0$, so $r_\lambda\geq d(x_\lambda,y)$ for all $\lambda\in D$. Since $r_\lambda$ converges to $0$ and $(x_\lambda)_{\lambda\in D}$ converges to $0$, then $y=0$. This shows that $(0,0)$ is the only upper bound, hence a join,  of $(x_\lambda,r_\lambda)_{\lambda\in D}$.

Case 2. $(x_\lambda)_{\lambda\in D}$ is  eventually constant. By assumption there is some $a\in X$ and some $\lambda\in D$ such that $x_\mu=a$ whenever $\mu\geq\lambda$. Then it is readily verified that $(a,0)$ is a join of $(x_\lambda,r_\lambda)_{\lambda\in D}$.  \end{example}


However, for standard quasi-metric spaces  (see  \cite[Definition 2.1]{Goubault-Ng}), we have the following conclusion.

\begin{cor}Let $X$ be a  standard quasi-metric spaces. Then, $X$ is Yoneda complete if and only if every directed subset $(x_\lambda,r_\lambda)_{\lambda\in D}$   of $\mathbf{B}X$ with $\inf_{\lambda\in D}r_\lambda=0$ has a join. \end{cor}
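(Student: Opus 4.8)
The plan is to regard the standard quasi-metric space $X$ as a $\sf Q$-category over Lawvere's quantale ${\sf Q}=([0,\infty]^{\mathrm{op}},+,0)$, so that the results established above apply verbatim. Under this reading the quantale join is the infimum of reals, the unit $k$ and the top both correspond to $0\in[0,\infty]$, the multiplication $\with$ is ordinary addition, and the formal-ball order reads $(x,r)\leq_{\mathbf{B}X}(y,s)\iff d(x,y)\leq r-s$; in particular ``$\bigvee_\lambda r_\lambda=1$'' becomes ``$\inf_\lambda r_\lambda=0$''. Since this $\sf Q$ is a continuous and integral quantale in which $\with$ distributes over non-empty meets, the forward implication is immediate and uses no standardness: if $X$ is Yoneda complete, then Proposition~\ref{directed set with radius 1} already supplies a join for every directed $(x_\lambda,r_\lambda)_{\lambda\in D}$ with $\inf_\lambda r_\lambda=0$.

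For the converse I would retrace the proof of Lemma~\ref{mainlemma}, whose sole use of full directed completeness is the passage ``$(x_\lambda,t\with r_\lambda)_{\lambda\in D}$ is directed, hence has a join''. Starting from an arbitrary forward Cauchy weight $\phi$ of $X$, Lemma~\ref{characterization of Cauchy ideal} yields a directed family $(x_\lambda,r_\lambda)_{\lambda\in D}$ in $\mathbf{B}X$ with $\inf_\lambda r_\lambda=0$ and $\phi=\bigvee_\lambda\bigwedge_{\mu\geq\lambda}X(-,x_\mu)$. The relaxed hypothesis provides a join; since any upper bound $(y,s)$ of this family satisfies $s\leq r_\lambda-d(x_\lambda,y)\leq r_\lambda$ for all $\lambda$, and hence $s\leq\inf_\lambda r_\lambda=0$, the join has radius $0$, say $(a,0)$. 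It remains to prove that $a$ is a Yoneda limit of $(x_\lambda)_{\lambda\in D}$. One inequality, $X(a,y)\leq\bigvee_\lambda\bigwedge_{\mu\geq\lambda}X(x_\mu,y)=:t$, is the upper-bound computation already done in Lemma~\ref{mainlemma} and needs nothing new; for the reverse inequality $t\leq X(a,y)$ we may assume $t>0$ in $\sf Q$, which for Lawvere's quantale means that $t$ is a finite real, so that the radius-shift by $t$ is meaningful.

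The decisive step is to replace the appeal to directed completeness by the defining property of a standard quasi-metric space, namely that suprema of directed families of formal balls are preserved by the radius-shift maps $(x,r)\mapsto(x,r+c)$. Because $(x_\lambda,r_\lambda)_{\lambda\in D}$ has supremum $(a,0)$ and $t$ is finite, standardness tells us that the shifted directed family $(x_\lambda,r_\lambda+t)_{\lambda\in D}=(x_\lambda,t\with r_\lambda)_{\lambda\in D}$ has supremum $(a,t)$ in $\mathbf{B}X$. This furnishes precisely the join that Lemma~\ref{mainlemma} drew from directed completeness, and it moreover locates it on the nose as $(a,t)$, so the detour through property~(R) used there to pin down this join is bypassed entirely (although (R) does hold automatically here, by Proposition~\ref{Archi vs reciprocal}). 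Feeding $(a,t)$ into the concluding computation of Lemma~\ref{mainlemma}---verifying $(x_\lambda,t\with r_\lambda)\leq(y,1)$ for every $\lambda$, so that the supremum satisfies $(a,t)\leq(y,1)$ and therefore $t\leq X(a,y)$---completes the proof that $a$ is a Yoneda limit; by Proposition~\ref{yoneda limit as colimits}, $\phi$ then has a colimit and $X$ is Yoneda complete.

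I expect the main obstacle to be exactly this substitution. The remainder is a faithful transcription of Lemma~\ref{mainlemma} into the language of Lawvere's quantale, but one must be careful that the relaxed completeness hypothesis is invoked only for the original family, whose radii have infimum $0$, whereas the shifted family---whose radii have infimum $t>0$, about which the relaxed hypothesis says nothing---is handled solely by standardness. Checking that standardness really delivers a supremum at the same centre $a$, with radius shifted by $t$, is the crux; once it is in place, the centre $a$ produced by the relaxed hypothesis is at once the join of $(x_\lambda,r_\lambda)_{\lambda\in D}$ and, after the shift, of $(x_\lambda,t\with r_\lambda)_{\lambda\in D}$, which is all the Yoneda-limit verification needs.
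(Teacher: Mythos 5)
Your proof is correct, but it takes a genuinely different route from the paper's. The paper's own proof is a two-step composition of black boxes: it cites Proposition~2.4 of Goubault-Larrecq and Ng, which says that for a \emph{standard} quasi-metric space the relaxed hypothesis (joins of directed families with $\inf_\lambda r_\lambda=0$) already forces $\mathbf{B}X$ to be a full dcpo, and then invokes Theorem~\ref{main}(ii), whose property~(R) hypothesis holds automatically because the product t-norm is Archimedean (Proposition~\ref{Archi vs reciprocal}). You instead inline the argument: you re-run the proof of Lemma~\ref{mainlemma}, observing that its only appeal to full directed completeness is the existence of a join for the shifted family $(x_\lambda, t\with r_\lambda)_{\lambda\in D}$, and you supply that join directly from standardness, which moreover pins it down as $(a,t)$ on the nose, so the property-(R) bookkeeping used in Lemma~\ref{mainlemma} to identify the join $(z,s)$ with $(a,t)$ disappears. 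What the paper's route buys is brevity; what yours buys is self-containedness (you never need the full dcpo statement of Goubault--Ng's Proposition~2.4, only the more elementary shift-preservation of suprema) and the sharper information that the Yoneda limit is exactly the center of the join of the original family. One small imprecision: what you call ``the defining property'' of standardness --- that the supremum of the shifted family is the shift $(a, r+t)$ of the supremum, same center --- is not literally Definition~2.1 of Goubault--Ng, which only demands that \emph{existence} of suprema be preserved under radius shifts; the identification of the shifted supremum is their Lemma~2.2 (and is derivable from the definition: if $(y,u)$ is the supremum of $(x_\lambda,r_\lambda+t)$ and $(a,r)$ that of $(x_\lambda,r_\lambda)$, then $(y,u)\leq(a,r+t)$ forces $u\geq r+t$, while $(y,u-t)$ being an upper bound of $(x_\lambda,r_\lambda)$ forces $d(a,y)\leq (r+t)-u\leq 0$, whence $(a,r+t)$ and $(y,u)$ are equivalent). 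You should cite or reproduce that lemma at the decisive step; with that reference in place your argument is complete.
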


\begin{proof}Proposition 2.4 in \cite{Goubault-Ng} shows that if $X$ is a  standard quasi-metric space and every directed subset $(x_\lambda,r_\lambda)_{\lambda\in D}$   of $\mathbf{B}X$ with $\inf_{\lambda\in D}r_\lambda=0$ has a join, then $\mathbf{B}X$ is directed complete. Thus, the conclusion follows from Theorem \ref{main}\thinspace(ii) immediately.   \end{proof}

\section{Conclusion}
Following Lawvere \cite{Lawvere1973}, the study of $\sf Q$-categories is  part of a \emph{generalized pure logic} with $\sf Q$ as the set of \emph{truth-values}. The connection between categorical properties of a $\sf Q$-category and order-theoretic properties of its set of formal balls has received much attention both in mathematics and theoretic computer science.  Corollary \ref{maincor} in this paper shows that the structure of the truth-values, i.e., the structure of the quantale $\sf Q$, also interacts with this connection. This kind of interaction deserves further investigation.


\end{document}